\renewcommand{\epsilon}{\varepsilon}
\renewcommand{\bar}{\overline}
\newcommand{\ind}[1]{\operatorname{ind}_{#1}\nolimits}
\newcommand{\proj}[1]{\operatorname{proj}_{#1}\nolimits}
\newcommand{\Projeins}{\ensuremath{\operatorname{Proj}^{\,\operatorname{1}}}}
\newcommand{\Bigcap}[1]{\ensuremath{\mathop{\cap}_{#1}}}
\newcommand{\Bigcup}[1]{\ensuremath{\mathop{\cup}_{#1}}}
\newtheorem*{thmA}{Theorem A}
\newtheorem*{thmB}{Theorem B}
\newtheorem{thm}{Theorem}[section]
\newtheorem{prop}{Proposition}[section]
\newtheorem{cor}{Corollary}[section]
\newtheorem{lem}{Lemma}[section]
\begin{document}
\title{Bornological projective limits\\of inductive limits of normed spaces}
\author{Jos\'{e} Bonet  and Sven-Ake Wegner}
\date{May 1, 2011}
\maketitle
\begin{center}\vspace{-7pt}
\textit{Dedicated to the memory of Susanne Dierolf.}
\end{center}
\vspace{3pt}
\begin{center}
\begin{minipage}[c]{11cm}\small\textbf{Abstract:}~We establish a criterion to decide when a countable projective limit of countable inductive limits of normed spaces is bornological. We compare the conditions occurring within our criterion with well-known abstract conditions from the context of homological algebra and with conditions arising within the investigation of weighted PLB-spaces of continuous functions.
\smallskip
\\\textbf{Keywords:}~Locally convex spaces, bornological spaces, projective limits, inductive limits, weighted spaces of continuous functions.
\smallskip
\\\textbf{2010 Mathematical Subject Classification:}~Primary 46A13; Secondary 46A03, 46A04, 46A06, 46E10, 46M40
\end{minipage}
\end{center}

\vspace{5pt}

\section{Introduction}

Many areas of the modern theory of locally convex spaces
which has been successful in the recent solution of analytic problems gained
great insight with new techniques related to homological algebra. In
particular, the derived projective limit functor, introduced first
by Palamodov \cite{Palamodov1968,Palamodov1971}, and studied since the mid 1980's by Vogt
\cite{VogtLectures} and others, played a very important role and became a very
useful tool. An excellent presentation of the homological tools can
be found in the book by Wengenroth \cite{Wengenroth}.  
Vogt \cite{VogtLectures, Vogt1989} was the first one to notice that the
vanishing of the derived projective limit functor for a countable
spectrum of LB-spaces is related to the locally convex properties
of the projective limit of the spectrum (for example being barrelled
or bornological); see Theorems 3.3.4 and 3.3.6 in \cite{Wengenroth}. He also
gave complete characterizations in the case of sequence spaces in
\cite[Section 4]{Vogt1989}.
\smallskip
\\For projective spectra of LB-spaces the vanishing of the functor $\Projeins$ is a sufficient condition for the corresponding projective limit to be ultrabornological (and thus also barrelled). 
A countable projective limit of countable inductive limits of Banach spaces is called a \textit{PLB-space}. 
PLB-spaces constitute a class which is strictly larger than the
class of PLS-spaces. A locally convex space is a PLS-space if it
is a countable projective limit of DFS-spaces (i.e.\ of countable
inductive limits of Banach spaces with compact linking maps). The
class of PLS-spaces contains many
natural examples from analysis like the space of distributions, the
space of real analytic functions and several spaces of
ultradifferentiable functions and ultradistributions. In recent years,
this class has played a relevant role in the
applications of abstract functional analysis to linear problems in
analysis. These problems include the solvability, existence of
solution operators and parameter dependence of linear partial
differential operators and convolution operators, the linear
extension of infinitely differentiable, holomorphic or real analytic
functions, and the study of composition operators on spaces of real
analytic functions, among other topics. See the survey article of
Doma\'{n}ski \cite{Domanski2004}. As can be
observed in chapter 5 of Wengenroth's lecture notes \cite{Wengenroth}, the study of the
splitting of short exact sequences of Fr\'echet or more general
spaces requires the consideration of PLB-spaces which are not PLS-spaces.
There are several possibilities to conclude that $\Projeins=0$ holds for projective spectra of LB-spaces. For a concrete projective limit, it firstly depends on abstract properties of the spectrum (like being reduced or having compact linking maps) whether a stronger or a weaker condition can be used.
\smallskip
\\The main result of this article Theorem \ref{THM} is a criterion to decide when a countable projective limit of countable inductive limits of normed spaces is bornological, that constitutes an extension of the methods for LB-spaces mentioned above. It can be used as a criterion for (quasi-)barrelledness of projective limits of LB-spaces which have a dense topological subspace which is the projective limit of inductive limits of normed spaces. In fact, our main motivation to prove Theorem \ref{THM} was to treat weighted spaces of polynomials and weighted spaces of continuous functions with compact support. The study of projective limits of weighted inductive limits of spaces of polynomials was necessary to investigate when a weighted PLB-space of holomorphic functions is barrelled in cases when the projective limit functor cannot be directly applied.
Results on this subject will be contained in a forthcoming paper by S.-A.\ Wegner. See also the last named author's doctoral thesis \cite{DISS}. In Section \ref{Applications} of this paper we present applications to weighted PLB-spaces of continuous functions. These spaces were investigated in \cite{ABB2009} and they contain not only the sequence spaces defined with sup-norms, but also permit one to treat spaces of
continuous linear operators from a K\"othe echelon space into
another or tensor products of Fr\'echet and LB-spaces of null
sequences. In the case of weighted PLB-spaces of continuous functions,  the dense subspace and its representation as a projective limit of inductive limits of normed spaces arise very naturally. Using this representation  we give an alternative (non-homological) proof of a result of Agethen, Bierstedt, Bonet \cite{ABB2009} in the case of functions vanishing at infinity. The situation in the case of bounded functions is the following: Agethen, Bierstedt, Bonet \cite{ABB2009} proved with the help of $\Projeins$ that a certain  condition on the weights is sufficient for ultrabornologicity. But it follows from their results that this condition cannot be necessary and that a necessary condition cannot be found using $\Projeins$, cf.~Theorem B in Section \ref{Applications}. We explain why our criterion does not yield a solution to this problem, either. The latter follows from a comparison of the condition appearing in Theorem \ref{THM} with \textquotedblleft{}classical\textquotedblright{} $\Projeins$-conditions 
which we perform in Section \ref{Criterion}. At the end of Section \ref{Applications} we extend this comparison including weight conditions used by Agethen, Bierstedt, Bonet \cite{ABB2009}.
\smallskip
\\We refer the reader to \cite{BMS1982} for weighted spaces of continuous functions and to \cite{Jarchow,  KoetheI, KoetheII,MeiseVogtEnglisch, BPC} for the general theory of locally convex spaces.

\section{A criterion for the bornologicity of projective limits of inductive limits of normed spaces}\label{Criterion}

In the sequel we let $\mathcal{X}=(X_N,\rho_M^N)$ denote a projective spectrum of inductive limits of normed spaces $X_N=\ind{n}X_{N,n}$, where we use the notation of Wengenroth \cite[Definition 3.1.1]{Wengenroth} and \textit{assume in addition that the $\rho_M^N$ are inclusions of linear subspaces}. Denote by $X=\proj{N}\ind{n}X_{N,n}$ the limit of the spectrum $\mathcal{X}$ and by $B_{N,n}$ the closed unit ball of the normed space $X_{N,n}$. For all $N$ we assume that for each bounded set $B\subseteq X_N$ there exists $n$ such that $B\subseteq B_{N,n}$. This assumption is equivalent to the fact that the spaces $X_N$ are regular inductive limits of normed spaces. We keep this notation in the rest of the section.

\begin{lem}\label{LEM-1} Let $X=\proj{N}\ind{n}X_{N,n}$ be a projective limit of regular inductive limits  of normed spaces. Assume that
$$
\text{(B1)}\;\;\;\;\;\;\;\forall\:N\;\exists\:M\;\forall\:m\;\exists\:n\;\colon B_{M,m}\subseteq \Bigcap{k\in\mathbb{N}}(B_{N,n}\cap X + {\textstyle\frac{1}{k}}B_{N,n})
$$
holds for the spectrum $\mathcal{X}$. Let $T\subseteq X$ be an absolutely convex set. Then
$$
\text{(B2)}\;\;\;\;\;\;\;\exists\:N\;\forall\:n\;\exists\:S>0\;\colon B_{N,n}\cap X\subseteq ST\;\;\;\;\;\;\;\;\;\;\;\;\;\;\;\;\;\;\;\;\;\;\;\;\;\;\;\;\;\;
$$
holds if and only if $T$ is a 0-neighborhood in $X$.
\end{lem}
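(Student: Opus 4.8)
The plan is to prove the two implications separately: the forward one ($T$ a $0$-neighborhood $\Rightarrow$ (B2)) is routine and does not use (B1), while the converse carries the content. For the forward direction I would use that, since the linking maps of $\mathcal X$ are inclusions, $X=\bigcap_N X_N$, with the sets $V\cap X$ (for $N\in\mathbb N$ and $V$ a $0$-neighborhood of $X_N$) forming a base of $0$-neighborhoods of $X$, as is usual for countable projective spectra. So I fix $N$ and such a $V$ with $V\cap X\subseteq T$; for each $n$ the inductive limit topology of $X_N=\ind{n}X_{N,n}$ provides $S>0$ with $\tfrac1S B_{N,n}\subseteq V$, whence $\tfrac1S(B_{N,n}\cap X)=\tfrac1S B_{N,n}\cap X\subseteq V\cap X\subseteq T$, i.e.\ $B_{N,n}\cap X\subseteq ST$, which is (B2).

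For the converse, the idea is to manufacture from the data of (B2) an explicit $0$-neighborhood of $X_M$ whose trace on $X$ lies in $2T$. Let $N$ be as in (B2), with $S_n>0$ satisfying $B_{N,n}\cap X\subseteq S_n T$; apply (B1) to this $N$ to get $M$ and a selector $m\mapsto n(m)$ with $B_{M,m}\subseteq\bigcap_k(B_{N,n(m)}\cap X+\tfrac1k B_{N,n(m)})$; put $\epsilon_m:=1/S_{n(m)}$ and let $V$ be the absolutely convex hull of $\bigcup_m\epsilon_m B_{M,m}$. Then $V$ is a $0$-neighborhood of $X_M=\ind{m}X_{M,m}$, so $V\cap X$ is a $0$-neighborhood of $X$, and it suffices to prove $V\cap X\subseteq 2T$ (then $\tfrac12 V\cap X\subseteq T$ shows $T$ is a $0$-neighborhood). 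To this end I take $x\in V\cap X$, fix a finite representation $x=\sum_{i\in F}\lambda_i v_i$ with $\sum_{i\in F}|\lambda_i|\le 1$ and $v_i\in\epsilon_{m_i}B_{M,m_i}$, set $n^\ast:=\max_{i\in F}n(m_i)$, and let $\kappa_i<\infty$ be the norm of the linking map $X_{N,n(m_i)}\hookrightarrow X_{N,n^\ast}$. Now --- the crucial point --- I invoke (B1) with the index $k$ chosen \emph{after} this representation has been fixed, large enough that $\tfrac1k\epsilon_{m_i}\kappa_i\le 1/S_{n^\ast}$ for every $i\in F$; this is legitimate because $F$ is finite and (B1) holds for all $k$. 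Writing accordingly $\tfrac1{\epsilon_{m_i}}v_i=a_i+c_i$ with $a_i\in B_{N,n(m_i)}\cap X$ and $c_i\in\tfrac1k B_{N,n(m_i)}$, and putting $y:=\sum_{i\in F}\lambda_i\epsilon_{m_i}a_i$ and $r:=x-y$, I would check that $\epsilon_{m_i}a_i\in\epsilon_{m_i}(B_{N,n(m_i)}\cap X)\subseteq\epsilon_{m_i}S_{n(m_i)}T=T$, so $y\in T$ by absolute convexity, while $r=x-y\in X$ and at the same time $r=\sum_{i\in F}\lambda_i\epsilon_{m_i}c_i\in X_{N,n^\ast}$ with $\|r\|_{X_{N,n^\ast}}\le\sum_{i\in F}|\lambda_i|\tfrac1k\epsilon_{m_i}\kappa_i\le 1/S_{n^\ast}$ by the choice of $k$; hence $r\in\tfrac1{S_{n^\ast}}(B_{N,n^\ast}\cap X)\subseteq T$ by (B2), and consequently $x=y+r\in 2T$.

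The step I expect to be the real obstacle is exactly this last one and, with it, the correct handling of the quantifiers in (B1). For a single fixed $k$, the term ``$+\tfrac1k B_{N,n}$'' yields only an approximation of the members of $V\cap X$ by elements of $2T$, up to a remainder in $X_{N,n^\ast}$ whose smallness is not uniform over the representations of $x$; since $T$ is not assumed closed one cannot pass to a limit, and being contained in the closure of a $0$-neighborhood does not make an absolutely convex set a $0$-neighborhood. The way around this is the observation used above: each individual representation of $x$ is finite, so only finitely many of the (finite) linking norms $\kappa_i$ of the inner inductive spectra intervene, and one is free to enlarge $k$ until (B2), applied to the single ball $B_{N,n^\ast}$, absorbs the remainder into $T$. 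Once this is in place, the remaining verifications are routine manipulations with absolutely convex sets, using only that $X$ is a linear subspace of each $X_N$.
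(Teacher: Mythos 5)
Your proof is correct; both implications are established, and the quantifier issue you flag (that the index $k$ in (B1) must be chosen \emph{last}, after a representation of the point has been fixed) is exactly the crux of the hard direction and is handled properly. The easy direction coincides with the paper's. For the converse, however, you build a different auxiliary object. The paper enlarges $T$: with $N$ from (B2) and $M$ from (B1) it forms $T_n:=\bigcap_{k\in\mathbb{N}}(T+\frac{1}{k}B_{N,n})$ and $T_0:=(\bigcup_{n}T_n)\cap X_M$, shows via (B1) and (B2) that $T_0$ absorbs every ball $B_{M,m}$, and then invokes regularity together with the bornologicity of $X_M$ to conclude that $T_0$ is a $0$-neighborhood; the final inclusion $T_0\cap X\subseteq 2T$ is proved by the same remainder trick as yours ($t=t_k+\frac{1}{k}b_k$ with $t_k\in T$, $b_k=k(t-t_k)\in B_{N,n}\cap X\subseteq ST$, and $k>S$). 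You instead shrink the canonical neighborhoods of $X_M$, taking the absolutely convex hull $V$ of $\bigcup_{m}\epsilon_m B_{M,m}$ with $\epsilon_m=1/S_{n(m)}$; this is a $0$-neighborhood directly from the definition of the inductive limit topology, so your argument uses neither the regularity of the $X_M$ nor their bornologicity --- a small gain in generality. The price is the bookkeeping with finite convex combinations: you must pass to $n^{\ast}=\max_{i}n(m_i)$, control the linking norms $\kappa_i$, and only then choose $k$; the paper avoids all of this because its sets $T_n$ are increasing and absolutely convex by construction, so no hulls and no mixing of indices occur. Both proofs ultimately rest on the same two observations: the (B1)-remainder lies in $X$ because it is a difference of two elements of $X$, and (B2) applied to a single ball then absorbs it into $T$ once $k$ is large enough.
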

\begin{proof}\textquotedblleft{}$\Rightarrow$\textquotedblright{} Fix $T\subseteq X$ absolutely convex and select $N$ as in (B2). For this $N$ select $M$ as in (B1). Fix $n$ and put $T_n:=\Bigcap{k\in\mathbb{N}}(T+{\textstyle\frac{1}{k}}B_{N,n})$. Since $T$ and $B_{N,n}$ are absolutely convex the same is true for $T_n$. Clearly $T_n\subseteq X_N$.  Since $B_{N,n}\subseteq B_{N,n+1}$ we get $T_n\subseteq T_{n+1}$. Accordingly, the set $T_0:=(\Bigcup{n\in\mathbb{N}}T_n)\cap{}X_M$ is an absolutely convex subset of $X_M$.
\smallskip
\\We claim that $T_0$ absorbs $B_{M,m}$ for each $m$. In order to see this, fix $m$ and select $n$ as in (B1). Applying (B2) w.r.t.~the latter $n$ we obtain $S>0$ such that $B_{N,n}\cap X\subseteq ST$. For an arbitrary $k$ we get
$\textstyle B_{N,n}\cap X+\frac{1}{k}B_{N,n}\subseteq ST+\frac{1}{k}B_{N,n}=ST+\frac{S}{Sk}B_{N,n}=S\big(T+\frac{1}{Sk}B_{N,n}\big)$. This yields
$\Bigcap{k\in\mathbb{N}}\big(B_{N,n}\cap X+{\textstyle\frac{1}{k}}B_{N,n}\big)\subseteq S\Bigcap{k\in\mathbb{N}}\big(T+{\textstyle\frac{1}{Sk}}B_{N,n}\big)\subseteq ST_n$. By (B1)  $B_{M,m}\subseteq ST_n$. Therefore $B_{M,m}=B_{M,m}\cap X_M\subseteq ST_n\cap X_M\subseteq S(T_n\cap X_M)\subseteq ST_0$,  and the claim is established.
\smallskip
\\Since $X_M$ is bornological as it is an inductive limit of normed spaces and the sets $B_{M,m}$ form a fundamental system of bounded sets for $X_M$, we conclude that $T_0$ is a 0-neighborhood in $X_M$, hence $T_0\cap X$ is a 0-neighborhood in $X$. To prove that $T$ is a 0-neighborhood, it is enough to show  $T_0\cap X\subseteq 2T$. Let $t\in T_0\cap X$ be given. Then there exists $n$ such that $t\in T_n\cap{}X$. For this $n$ we apply (B2) to get $S>0$ with $B_{N,n}\cap X\subseteq ST$. For $k>S$, $t \in T+\frac{1}{k}B_{N,n}$, i.e.~$t=t_k+\frac{1}{k}b_k$, with $t\in X$, $t_k\in T\subseteq X$ and $b_k\in B_{N,n}$. Thus, $b_k=k(t-t_k) \in X\cap B_{N,n}\subseteq ST$. Therefore $\frac{1}{k}b_k\in T$. Finally we have $x=t_k+{\textstyle\frac{1}{k}}b_k\in T+T\subseteq 2T.$
\medskip
\\\textquotedblleft{}$\Leftarrow$\textquotedblright{} Let $T$ be a 0-neighborhood in $X$. By definition there exist $N$  and a 0-neighbor\-hood $V$ in $X_N$ such that that $V\cap X\subseteq T$. Let $n$ be arbitrary. Since $B_{N,n}$ is bounded in $X_N$, there exists $S>0$ such that $B_{N,n}\subseteq SV$, thus $B_{N,n}\cap X\subseteq ST$.
\end{proof}

Our main result is a direct consequence of Lemma \ref{LEM-1} and the definition of bornological locally convex spaces.

\begin{thm}\label{THM} Let $\mathcal{X}=(X_N,\rho_M^N)$ be a projective spectrum of regular inductive limits of normed spaces $X_N=\ind{n}X_{N,n}$ with inclusions as linking maps and projective limit $X$ satisfying (B1). The space  $X$ is bornological if and only if condition (B2) holds for each absolutely convex and bornivorous set $T\subseteq X$.
\end{thm}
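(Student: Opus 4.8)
The plan is to derive Theorem \ref{THM} directly from Lemma \ref{LEM-1} by recalling the definition of a bornological space: a locally convex space is bornological if and only if every absolutely convex bornivorous (i.e.\ bounded-absorbing) subset is a 0-neighborhood. Thus the equivalence we must prove is: $X$ is bornological $\iff$ for every absolutely convex bornivorous $T\subseteq X$ condition (B2) holds.

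First I would prove the forward implication. Assume $X$ is bornological and let $T\subseteq X$ be absolutely convex and bornivorous. By the characterization just recalled, $T$ is a 0-neighborhood in $X$. Now apply the \textquotedblleft$\Leftarrow$\textquotedblright{} direction of Lemma \ref{LEM-1} (which does not use (B1)): since $T$ is a 0-neighborhood in $X$, condition (B2) holds for $T$. Hence (B2) holds for every absolutely convex bornivorous $T$.

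For the converse, assume (B2) holds for each absolutely convex bornivorous $T\subseteq X$; we must show $X$ is bornological, i.e.\ that every absolutely convex bornivorous $T\subseteq X$ is a 0-neighborhood. So fix such a $T$. By hypothesis (B2) holds for $T$, and since the spectrum $\mathcal{X}$ satisfies (B1) by assumption, the \textquotedblleft$\Rightarrow$\textquotedblright{} direction of Lemma \ref{LEM-1} applies and yields that $T$ is a 0-neighborhood in $X$. Since $T$ was an arbitrary absolutely convex bornivorous set, $X$ is bornological.

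The argument is essentially a bookkeeping step: the only subtlety to watch is that the \textquotedblleft$\Rightarrow$\textquotedblright{} direction of Lemma \ref{LEM-1} is precisely the place where hypothesis (B1) is consumed, so it is used exactly once (in the converse implication), whereas the \textquotedblleft$\Leftarrow$\textquotedblright{} direction, used in the forward implication, needs no extra assumption. There is no real obstacle here; the mathematical content has been isolated in Lemma \ref{LEM-1}, and the theorem is obtained by combining it with the standard definition of bornologicity.
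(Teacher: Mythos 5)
Your proposal is correct and is exactly the argument the paper intends: the authors state that Theorem \ref{THM} is a direct consequence of Lemma \ref{LEM-1} and the definition of bornological spaces, and your write-up simply makes that bookkeeping explicit, correctly noting that (B1) is consumed only in the \textquotedblleft{}$\Rightarrow$\textquotedblright{} direction of the lemma.
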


The definition of condition (B1) and the proof of Lemma \ref{LEM-1} were inspired by results of Vogt \cite{VogtLectures, Vogt1989}, see Wengenroth \cite[3.3.4]{Wengenroth}, on the connection of the vanishing of $\Projeins$ for a projective spectrum of LB-spaces and the ultrabornologicity of the corresponding limit. In view of Theorem \ref{THM} and the next proposition, (B1) is in some sense a \textquotedblleft{}weak variant\textquotedblright{} of  condition $\Projeins=0$.

\begin{prop}\label{REM-1} Let $\mathcal{X}$ be a projective spectrum of regular inductive limits of normed spaces. If all $X_{N,n}$ are Banach spaces and $\Projeins \mathcal{X}=0$ holds, then (B1) is satisfied.
\end{prop}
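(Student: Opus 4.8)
The plan is to unwind the definition of $\Projeins\mathcal{X}=0$ for the spectrum of LB-spaces $\mathcal{X}=(X_N,\rho_M^N)$ and show that it forces the inclusion in (B1). I recall that for a reduced (or at least for our) projective spectrum of LB-spaces $X_N=\ind{n}X_{N,n}$ with Banach steps $X_{N,n}$, the condition $\Projeins\mathcal{X}=0$ is equivalent — by Vogt's theorem, see Wengenroth \cite[3.2.14 / 3.3.4]{Wengenroth} — to a condition of the type
$$
\forall\:N\;\exists\:M\;\forall\:K\;\exists\:m\;\forall\:k\;\exists\:n,\,C>0\;\colon\; \rho^N_M(B_{M,m})\subseteq C\,\rho^N_K(B_{K,k})+B_{N,n}.
$$
Since in our situation all linking maps $\rho^N_M$ are inclusions of subspaces, this reads
$$
B_{M,m}\subseteq C\,B_{K,k}+B_{N,n}\qquad\text{(inside }X_N\text{)}.
$$
First I would fix $N$ and choose $M$ accordingly. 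Then, given $m$, I need to produce $n$ so that $B_{M,m}\subseteq \bigcap_{k\in\mathbb{N}}\big(B_{N,n}\cap X+\tfrac1k B_{N,n}\big)$. The natural move is to take $K=N$ in the topological condition above (the spectrum is "cofinal in itself"), or more carefully to exploit that the $\Projeins$-condition must in particular control how $B_{M,m}$ sits relative to the whole limit $X$.

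The key step is to pass from the finite sum $C B_{K,k}+B_{N,n}$ to the intersection $\bigcap_k (B_{N,n}\cap X+\tfrac1k B_{N,n})$. The idea: for a fixed $b\in B_{M,m}$ and each $j\in\mathbb{N}$, apply the $\Projeins$-estimate with the parameter "$k$" chosen so large that $\tfrac1j$ enters, obtaining a decomposition $b = c_j + e_j$ with $c_j$ in a fixed bounded set of some $X_{K,k}$ and $e_j\in\tfrac1j B_{N,n}$. The hard part is then to show that the "main parts" $c_j$ can actually be taken to lie in $X$ (not merely in $X_K$), i.e. to upgrade $c_j\in B_{K,k}$ to $c_j\in B_{N,n}\cap X$. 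This is exactly where one iterates the $\Projeins$-type condition down the whole spectrum, producing for each index $L\geq N$ a correction term, and then invokes completeness of the Banach steps $X_{L,\ell}$ together with the regularity of the inductive limits to sum up a convergent series whose limit lies in $X=\bigcap_L X_L$. This telescoping/summation argument — ensuring the corrections are summable and the partial sums stay in a common $B_{N,n}$ — is the technical heart of the proof and is precisely the mechanism underlying Vogt's characterization; the regularity hypothesis on the $X_N$ (built into the standing assumptions) is used here to keep all the bounded sets involved inside a single ball $B_{N,n}$.

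Once $b$ is written as (element of $B_{N,n}\cap X$) plus (element of $\tfrac1j B_{N,n}$) for every $j$, we get $b\in\bigcap_{k\in\mathbb{N}}\big(B_{N,n}\cap X+\tfrac1k B_{N,n}\big)$, which is (B1), possibly after absorbing a multiplicative constant $C$ by passing from $n$ to a larger index (using that $C B_{N,n}\subseteq B_{N,n'}$ for suitable $n'$, which holds since the inclusion $X_{N,n}\hookrightarrow X_{N,n'}$ can be arranged to have norm $\leq 1$ after rescaling, or simply because the $B_{N,n}$ form a fundamental system of bounded sets). I expect the main obstacle to be the bookkeeping of the quantifiers in the $\Projeins$-condition — correctly threading $N\rightsquigarrow M$, then $m$, then extracting $n$ and the constant — and making the summation of correction terms rigorous; conceptually, however, (B1) is visibly a weakening of the $\Projeins$-condition (it only asks for approximation by elements of $X$ rather than the full exactness estimate), so the implication is morally immediate.
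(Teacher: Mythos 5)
Your overall strategy --- unwinding a quantitative reformulation of $\Projeins\mathcal{X}=0$ and decomposing each $b\in B_{M,m}$ as a ``main part'' plus an arbitrarily small error in $B_{N,n}$ --- is in the right spirit, but the proposal has a genuine gap at exactly the point you yourself flag as ``the technical heart'': upgrading the main parts $c_j$ from elements of some $B_{K,k}$ to elements of $B_{N,n}\cap X$. You describe an iteration down the spectrum with a summable series of corrections and an appeal to completeness of the steps, but you do not carry it out, and this step is the entire content of the proposition; everything before and after it is bookkeeping. In addition, the condition you quote has the quantifier $\exists\,m$ on the index of the ball $B_{M,m}$, whereas (B1) requires the inclusion for \emph{every} $m$ (with $n$ chosen afterwards), so the reformulation as you state it is not even the right starting point: the input must have the shape $\forall\,N\;\exists\,M\;\forall\,(\text{bounded set of }X_M)\;\exists\,(\text{ball of }X_N)$.

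The paper avoids all of this by citing the precise intermediate statement established in the proof of Wengenroth's Theorem 3.3.4: $\Projeins\mathcal{X}=0$ implies that for each $N$ there is $M$ such that every Banach disc $D$ of $X_M$ satisfies $D\subseteq\overline{A\cap X}^{(X_N)_A}$ for some Banach disc $A$ of $X_N$. This already packages the hard part --- the approximants lie in $X$ and the approximation is in the norm of the local Banach space --- and has the correct quantifier pattern. Since the balls $B_{N,n}$ may be taken as a fundamental system of Banach discs, one gets $B_{M,m}\subseteq\overline{B_{N,n}\cap X}^{X_{N,n}}$, and (B1) follows from the one-line inclusion $\overline{B_{N,n}\cap X}^{X_{N,n}}\subseteq\Bigcap{k\in\mathbb{N}}\big(B_{N,n}\cap X+\frac{1}{k}B_{N,n}\big)$. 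If you insist on a self-contained argument you would have to reproduce the series construction from Wengenroth's proof in full; as written, your proposal defers that work rather than doing it.
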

\begin{proof}
We may assume w.l.o.g.~that $(B_{N,n})_{n\in\mathbb{N}}$ is a fundamental system of Banach discs in each of the LB-spaces $X_N$. In the proof of \cite[Theorem 3.3.4]{Wengenroth} it is shown that $\Projeins\mathcal{X}=0$ implies
$$
\forall\:N\;\exists\:M\;\forall\:D\in\mathcal{BD}(X_M)\;\exists\:A\in\mathcal{BD}(X_N)\:\colon D\subseteq \overline{A\cap X}^{(X_N)_A},
$$
where $\mathcal{BD}(X_N)$ is the system of all Banach discs in $X_N$ and $(X_N)_A$ is the Banach space associated to the Banach disc $A$. Now we may replace the Banach disc $A$ by $B_{M,m}$ for some $m$, resp.~$D$ by $B_{N,n}$ for some $n$ and thus the above condition yields
$$
\forall\:N\;\exists\:M\;\forall\:m\;\exists\:n\:\colon B_{M,m}\subseteq \overline{B_{N,n}\cap X}^{X_{N,n}}.
$$
Now (B1) follows, since $\overline{B_{N,n}\cap X}^{X_{N,n}}\subseteq \Bigcap{k\in\mathbb{N}}\big(B_{N,n}\cap X+{\textstyle\frac{1}{k}}B_{N,n}\big)$. \end{proof}

It is well-known that there is a connection between the vanishing of $\Projeins$ on a projective spectrum $\mathcal{X}$ of locally convex spaces and reducedness-properties of the spectrum: If $\mathcal{X}$ is \textit{reduced in the classical sense} (see e.g.~Floret, Wloka \cite[p.~143]{FloretWloka}), i.e.~if the limit space $X$ is dense in each step, then $\mathcal{X}$ is \textit{strongly reduced in the sense of Wengenroth} \cite[Definition 3.3.5]{Wengenroth}, that is for each $N$ there exists $M$ such that $X_M\subseteq\overline{X}^{X_N}$ holds. On the other hand, $\mathcal{X}$ being strongly reduced implies that $\mathcal{X}$ is \textit{reduced in the sense of Wengenroth} \cite[Definition 3.2.17]{Wengenroth}, i.e.~for each $N$ there exists $M$ such that for each $K$ the inclusion $X_M\subseteq \overline{X_K}^{X_N}$ is valid. The latter notion coincides with the one used by Braun, Vogt \cite[Definition 4]{BraunVogt1997}.
\smallskip
\\Wengenroth \cite[remarks previous to Proposition 3.3.8]{Wengenroth} mentioned that for a spectrum $\mathcal{X}$ of LB-spaces $\Projeins\mathcal{X}=0$ implies that $\mathcal{X}$ is strongly reduced. As the next remark shows, for a projective spectrum of inductive limits of normed spaces condition (B1) implies the same property.

\begin{prop}\label{REM-2}  Let $\mathcal{X}=(X_N,\rho_M^N)$ be a projective spectrum of regular inductive limits of normed spaces $X_N=\ind{n}X_{N,n}$ with inclusions as linking maps and projective limit $X$. If  $\mathcal{X}$ satisfies (B1), then $\mathcal{X}$ is strongly reduced, that is for each $N$ there exists $M$ such that $X_M\subseteq\overline{X}^{X_N}$ holds.
\end{prop}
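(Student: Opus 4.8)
The plan is to reduce the assertion to a statement about the closed unit balls $B_{M,m}$ and then to observe that in condition (B1) the perturbation term $\tfrac1k B_{N,n}$ becomes small with respect to the \emph{norm} of the step $X_{N,n}$, whose canonical injection into $X_N$ is continuous.

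First I would fix $N$ and choose $M$ as provided by (B1). Since $X$ is a linear subspace of $X_N$, its closure $\overline{X}^{X_N}$ is again a linear subspace of $X_N$; moreover every $x\in X_M$ lies in some step $X_{M,m}$ and hence in $\lambda B_{M,m}$ for a suitable $\lambda>0$. Consequently it suffices to prove that $B_{M,m}\subseteq\overline{X}^{X_N}$ for every $m$.

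To this end, fix $m$ and pick $n$ according to (B1), so that $B_{M,m}\subseteq\bigcap_{k\in\mathbb{N}}\big(B_{N,n}\cap X+\tfrac1k B_{N,n}\big)$. Given $x\in B_{M,m}$, for each $k$ we may write $x=y_k+\tfrac1k b_k$ with $y_k\in B_{N,n}\cap X$ and $b_k\in B_{N,n}$; in particular $x\in X_{N,n}$ and $\|x-y_k\|_{X_{N,n}}=\tfrac1k\|b_k\|_{X_{N,n}}\le\tfrac1k$. Hence $y_k\to x$ in the norm of $X_{N,n}$, and since the inclusion $X_{N,n}\hookrightarrow X_N$ is continuous, also $y_k\to x$ in $X_N$. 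As $y_k\in X$ for every $k$, this yields $x\in\overline{X}^{X_N}$. Equivalently, one has the chain of inclusions $\bigcap_{k\in\mathbb{N}}\big(B_{N,n}\cap X+\tfrac1k B_{N,n}\big)\subseteq\overline{B_{N,n}\cap X}^{X_{N,n}}\subseteq\overline{X}^{X_N}$, which is precisely a refinement of the last inclusion used in the proof of Proposition \ref{REM-1}.

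I do not anticipate a genuine obstacle: the computation is elementary once one passes to the norm of the step $X_{N,n}$ and uses the continuity of the canonical map into $X_N$. The only point requiring a bit of care is the reduction in the second paragraph — that controlling the balls $B_{M,m}$ suffices — which relies on $X_M=\bigcup_m X_{M,m}$ holding algebraically and on $\overline{X}^{X_N}$ being a linear subspace of $X_N$.
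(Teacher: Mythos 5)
Your argument is correct and follows essentially the same route as the paper's proof: apply (B1), observe that the intersection $\bigcap_{k}\bigl(B_{N,n}\cap X+\tfrac1k B_{N,n}\bigr)$ forces norm-approximation in $X_{N,n}$ by elements of $B_{N,n}\cap X$, and pass to $X_N$ via the continuous inclusion $X_{N,n}\hookrightarrow X_N$. The only cosmetic difference is that the paper scales an arbitrary $x\in X_M$ into $B_{M,m}$ by a factor $\rho$, whereas you reduce to the unit balls via linearity of $\overline{X}^{X_N}$; these are the same observation.
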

\begin{proof}
For given $N$ we choose $M$ as in (B1) and consider $x\in X_M$. Then there are $m$ and $\rho>0$ with $\rho x\in B_{M,m}$. For this $m$ we apply (B1) to obtain $n$ with $B_{M,m}\subseteq \bar{B_{N,n}\cap X}^{X_{N,n}}$, hence $\rho x\in\bar{B_{N,n}\cap X}^{X_{N,n}}$. Thus there exists $(x_j)_{j\in\mathbb{N}}\subseteq B_{N,n}\cap X$ with $x_j\rightarrow \rho x$ for $j\rightarrow\infty$ w.r.t.~$\|\cdot\|_{N,n}$, thus w.r.t.~the inductive topology of $X_N$. Therefore $ x\in \bar{X}^{X_N}$.
\end{proof}

Roughly speaking the Propositions \ref{REM-1} and \ref{REM-2} mean that condition (B1) is placed \textquotedblleft{}somewhere in between\textquotedblright{} the vanishing of $\Projeins$ and strong reducedness of the spectrum $\mathcal{X}$. In order to be more precise we introduce the following variant of (B1). We say that a spectrum $\mathcal{X}$ satisfies \textit{condition $\text{(}\bar{\text{B1}}\text{)}$} if
$$
\forall\:N\;\exists\:M\;\forall\:m\;\exists\:n\;\forall\:\epsilon>0\;\exists\:B\subseteq X\text{ bounded}\colon B_{M,m}\subseteq B+\epsilon B_{N,n}
$$
holds.
\smallskip
\\Condition $\text{(}\bar{\text{B1}}\text{)}$ is related to the following two conditions of Braun, Vogt \cite[Definition 4]{BraunVogt1997}. We say that $\mathcal{X}$ satisfies $\text{(P}_{\!2}\text{)}$ if
$$
\forall\: N\;\exists\:M,\,n\;\forall\:K,\,m'\;\exists\:k,\,S>0\colon B_{M,m'}\subseteq S(B_{N,n}+B_{K,k}).
$$
We say that $\mathcal{X}$ satisfies $\text{(}\overline{\text{P}_{\!2}}\text{)}$ if
$$
\forall\:N\;\exists\;M',\,n\;\forall\:K,\,m,\,\epsilon>0\:\exists\:k',\,S'>0\colon B_{M',m}\subseteq \epsilon B_{N,n}+S'B_{K,k'}.
$$
Braun, Vogt \cite{BraunVogt1997} proved that for an arbitrary projective spectrum of LB-spaces $\mathcal{X}$, $\Projeins\mathcal{X}=0$ holds if $\mathcal{X}$ satisfies $\text{(}\overline{\text{P}_{\!2}}\text{)}$. Moreover they showed that in the case of a DFS-spectrum $\mathcal{X}$ is reduced and satisfies $\text{(P}_{\!2}\text{)}$ if and only if $\Projeins\mathcal{X}=0$.

\begin{prop}\label{PROP-1} Let $\mathcal{X}=(X_N)_{N\in\mathbb{N}}$ be a projective spectrum of regular LB-spaces with inclusions as linking maps. If $\mathcal{X}$ satisfies $\text{(P}_{\!2}\text{)}$ and $\text{(}\bar{\text{B1}}\text{)}$ then $\mathcal{X}$ satisfies $\text{(}\overline{\text{P}_{\!2}}\text{)}$.
\end{prop}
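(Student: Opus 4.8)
The plan is to deduce $\text{(}\overline{\text{P}_{\!2}}\text{)}$ by composing the two hypotheses: $\text{(}\bar{\text{B1}}\text{)}$ will furnish the small multiple $\epsilon$ in front of $B_{N,n}$, whereas $\text{(P}_{\!2}\text{)}$ will be used to turn the index $n$ produced by $\text{(}\bar{\text{B1}}\text{)}$ --- which a priori depends on $m$ --- into one that depends on $N$ alone, as $\text{(}\overline{\text{P}_{\!2}}\text{)}$ demands. The mechanism for this is to apply $\text{(}\bar{\text{B1}}\text{)}$ not at the index $N$ itself but one level higher, at the index $M_1$ supplied by $\text{(P}_{\!2}\text{)}$, and afterwards to descend back to a single $n$ by means of $\text{(P}_{\!2}\text{)}$.
\smallskip
\\In detail, given $N$ I would first invoke $\text{(P}_{\!2}\text{)}$ for $N$ to obtain indices $M_1$ and $n$, the latter being the $n$ claimed in $\text{(}\overline{\text{P}_{\!2}}\text{)}$, such that for all $K$ and all $m'$ there are $k,S>0$ with $B_{M_1,m'}\subseteq S(B_{N,n}+B_{K,k})$. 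Next I would apply $\text{(}\bar{\text{B1}}\text{)}$ with $M_1$ in the role of the leading index, which produces the desired $M'$ with the property that for every $m$ there is $m'$ such that for each $\delta>0$ one finds a bounded set $B\subseteq X$ with $B_{M',m}\subseteq B+\delta B_{M_1,m'}$. Now, for arbitrarily prescribed $K$, $m$ and $\epsilon>0$, I would take this $m'$, then $k$ and $S$ from $\text{(P}_{\!2}\text{)}$, put $\delta:=\epsilon/S$, and select $B$ as above; chaining the two inclusions and distributing the scalar yields $B_{M',m}\subseteq B+\delta S(B_{N,n}+B_{K,k})=B+\epsilon B_{N,n}+\epsilon B_{K,k}$.
\smallskip
\\It then remains to absorb $B$ and $\epsilon B_{K,k}$ into a single term $S'B_{K,k'}$. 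Here I would use that the canonical projection $X\to X_K$ is continuous, so that $B$, being bounded in $X$, is bounded in $X_K$; by regularity of the LB-space $X_K$ there is then $k_1$ with $B\subseteq B_{K,k_1}$. Taking $k':=\max\{k,k_1\}$, using $B_{K,j}\subseteq B_{K,j+1}$ and the absolute convexity of $B_{K,k'}$ (so that $B_{K,k'}+\epsilon B_{K,k'}=(1+\epsilon)B_{K,k'}$), one arrives at $B_{M',m}\subseteq\epsilon B_{N,n}+(1+\epsilon)B_{K,k'}$, which is exactly $\text{(}\overline{\text{P}_{\!2}}\text{)}$ with $S':=1+\epsilon$. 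The step I expect to require the most care is this quantifier bookkeeping --- making sure that $M'$ and $n$ depend on $N$ only and that the constant $S$ coming from $\text{(P}_{\!2}\text{)}$ is neutralised by the choice $\delta=\epsilon/S$; the manipulations with unit balls and Minkowski sums themselves are routine.
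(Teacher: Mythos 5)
Your proof is correct and follows essentially the same route as the paper's: apply $\text{(P}_{\!2}\text{)}$ first to fix the intermediate index and $n$, apply $\text{(}\bar{\text{B1}}\text{)}$ one level up with $\delta=\epsilon/S$, chain the inclusions, and absorb the bounded set $B$ into a ball $B_{K,k'}$ via regularity of $X_K$. The only cosmetic difference is the final constant ($S'=1+\epsilon$ versus the paper's $S'=\lambda+\epsilon$), which stems from whether one normalises bounded sets of $X_K$ to lie in a unit ball or only in a multiple of one; both are fine.
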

\begin{proof} $\text{(}\bar{\text{B1}}\text{)}$ can be written as follows
$$
\forall\:M\;\exists\:M'\:\forall\:m\;\exists\:m'\;\forall\:\epsilon>0\;\exists\:B\subseteq X\text{ bounded}\colon B_{M',m}\subseteq B+\epsilon B_{M,m'}.
$$
We show $\text{(}\overline{\text{P}_{\!2}}\text{)}$ in the way it is stated above. Let $N$ be given. We choose $M$ and $n$ as in $\text{(P}_{\!2}\text{)}$ and put $M$ into $\text{(}\bar{\text{B1}}\text{)}$ to obtain $M'$. Let $K$, $m$ and $\epsilon>0$ be given.  We put $m$ into $\text{(}\bar{\text{B1}}\text{)}$ and obtain $m'$.  We put $m'$, $K$ and $\epsilon>0$ into $\text{(P}_{\!2}\text{)}$ and obtain $k$ and $S>0$.  Finally, we put $\frac{\epsilon}{S}$ into $\text{(}\bar{\text{B1}}\text{)}$ and get a bounded set $B\subseteq X$. Now we have by $\text{(}\bar{\text{B1}}\text{)}$ and $\text{(P}_{\!2}\text{)}$ the two inclusions $B_{M',m}\subseteq B+{\textstyle\frac{\epsilon}{S}}B_{M,m'}$ and $B_{M,m'}\subseteq SB_{N,n}+SB_{K,k}$. Since $B$ is bounded in $X$, it is also bounded in the LB-space $X_K$ and this space is regular, i.e.~there exists $k'$ and $\lambda>0$ such that $B\subseteq \lambda B_{K,k'}$ and we clearly may choose $k'\geqslant k$. From the three inclusions we just mentioned we get $B_{M',m}\subseteq (\lambda +\epsilon)B_{K,k'}+\epsilon B_{N,n}$ and thus it is enough to select $S':=\lambda+\epsilon$ to finish the proof.
\end{proof}

For the rest of this section we treat the following special case. We assume $X_{N,n}=X_{N,n+1}=:X_N$ for all $n$ and w.l.o.g.~$B_{N+1}\subseteq B_{N}$, $X=\proj{n}X_N$. We further assume that $X_N$ is a Banach space, thus $X$ is a Fr\'{e}chet space. In this case condition (B1) reduces to
$$
\forall\:N\;\exists\:M\colon B_M\subseteq \Bigcap{k\in\mathbb{N}}B_N\cap X+{\textstyle\frac{1}{k}}B_N
$$
and $\text{(}\bar{\text{B1}}\text{)}$ reduces to
$$
\forall\:N\;\exists\:M\;\forall\:\epsilon>0\;\exists\:B\subseteq X\text{ bounded}\colon B_M\subseteq B+\epsilon B_N.
$$
The latter condition implies
$$
\forall\:N\;\exists\:M\;\forall\:\epsilon>0\;\exists\:B\subseteq X\text{ bounded}\colon B_M \cap X\subseteq B+\epsilon (B_N\cap X),
$$
that is exactly the definition of quasinormability, which was introduced by Gro\-then\-dieck \cite[Definition 4, p.~106 and Lemma 6, p.~107]{Grothendieck1954} (cf.~\cite[Definition after Proposition 26.12]{MeiseVogtEnglisch}) as an extension of  Schwartz spaces and Banach spaces. In fact, a Fr\'{e}chet space is Schwartz if and only if the above condition holds with a finite set $B$, cf.~\cite[Remark previous to 26.13]{MeiseVogtEnglisch}.

\begin{prop}\label{PROP-2} If $\mathcal{X}=(X_N)_{N\in\mathbb{N}}$ is a projective spectrum of Banach spaces with inclusions as linking maps and $X=\proj{N}X_N$ is the corresponding Fr\'{e}chet space, we have (i)$\Rightarrow$(ii)$\Leftrightarrow$(iii) where:\vspace{2pt}
\begin{compactitem}
\item[(i)] Condition $\text{(}\bar{\text{B1}}\text{)}$ holds.\vspace{0pt}
\item[(ii)] $X$ is reduced in the sense $\forall\:N\;\exists\:M\colon X_M\subseteq \bar{X}^{X_N}$.\vspace{2pt}
\item[(iii)] Condition (B1) holds.\vspace{3pt}
\end{compactitem}
In particular \textquotedblleft{}$\text{(}\bar{\text{B1}}\text{)}\Rightarrow\text{(B1)}$\textquotedblright{} holds in general for projective spectra of Banach spaces with inclusions as linking maps.
\end{prop}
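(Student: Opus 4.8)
The plan is to establish the two implications (i)$\Rightarrow$(ii) and (ii)$\Rightarrow$(iii) by hand and to read off (iii)$\Rightarrow$(ii) from Proposition~\ref{REM-2}; together with the composition this also yields the concluding ``in particular''. I would begin with a reformulation of (B1): since $\{\tfrac1k B_N\}_{k\in\mathbb{N}}$ is a neighbourhood basis of $0$ in the normed space $X_N$, one has $\Bigcap{k\in\mathbb{N}}\big(B_N\cap X+\tfrac1k B_N\big)=\bar{B_N\cap X}^{X_N}$, the closure being taken in $X_N$. Hence in the present situation (B1) says precisely that for every $N$ there is $M$ with $B_M\subseteq\bar{B_N\cap X}^{X_N}$ -- the same shape as the inclusion extracted in the proof of Proposition~\ref{REM-1}. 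Throughout I would use that in (B1), $\text{(}\bar{\text{B1}}\text{)}$ and (ii) the index $M$ may be taken with $M\geqslant N$, so that $B_M\subseteq B_N$ and consequently $\|x\|_N\leqslant1$ for every $x\in B_M$.

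For (i)$\Rightarrow$(ii): fix $N$, choose $M$ as in $\text{(}\bar{\text{B1}}\text{)}$ and let $x\in B_M$. Given $\epsilon>0$, condition $\text{(}\bar{\text{B1}}\text{)}$ yields a bounded $B\subseteq X$ with $B_M\subseteq B+\epsilon B_N$, so $x=b+\epsilon w$ with $b\in B\subseteq X$ and $w\in B_N$, whence $\|x-b\|_N\leqslant\epsilon$. Letting $\epsilon\to0$ gives $x\in\bar{X}^{X_N}$, and since $X_M$ is the linear span of $B_M$ while $\bar{X}^{X_N}$ is a linear subspace, $X_M\subseteq\bar{X}^{X_N}$. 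For (iii)$\Rightarrow$(ii) I would simply invoke Proposition~\ref{REM-2}: its hypotheses are satisfied by the spectrum written with the constant representations $X_{N,n}=X_N$, and in that case the assumption (B1) of that proposition reduces exactly to (iii) and its conclusion is exactly (ii).

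The implication (ii)$\Rightarrow$(iii) is the only one that needs a little care, and I expect the rescaling in it to be the (minor) crux. From (ii) an element $x\in B_M\subseteq B_N$ is a $\|\cdot\|_N$-limit of a sequence $(x_j)$ in $X$; but (B1), in the reformulated shape above, requires $x$ to be approximated from within $B_N\cap X$, and the $x_j$ need not lie in $B_N$. I would remedy this by passing to $x_j':=x_j/\max(1,\|x_j\|_N)$, which still belongs to $X$ (a linear subspace) and now to $B_N$, hence to $B_N\cap X$; using $\|x\|_N\leqslant1$ one checks that $\|x_j'-x_j\|_N=\max(0,\|x_j\|_N-1)\to0$, so $x_j'\to x$ in $X_N$ and therefore $x\in\bar{B_N\cap X}^{X_N}$. (If one prefers not to pass through closures, the same rescaling lets one write $x=x_j'+(x-x_j')$ with $x-x_j'\in\tfrac1k B_N$ for $j$ large, for each prescribed $k$.) With (i)$\Rightarrow$(ii), (ii)$\Rightarrow$(iii) and (iii)$\Rightarrow$(ii) in place, chaining (i)$\Rightarrow$(ii)$\Rightarrow$(iii) proves the final assertion, and the remaining verifications are routine unravellings of the definitions.
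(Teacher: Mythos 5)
Your proposal is correct and takes essentially the same route as the paper: (i)$\Rightarrow$(ii) via the same $\epsilon$-decomposition, (iii)$\Rightarrow$(ii) by invoking Proposition~\ref{REM-2}, and (ii)$\Rightarrow$(iii) by rescaling an approximating sequence from $X$ into $B_N\cap X$. The only (cosmetic) difference is that your uniform normalisation $x_j/\max(1,\|x_j\|_N)$ merges into one step the paper's two-case argument ($x$ in the interior of $B_N$ versus $\|x\|_N=1$).
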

\begin{proof}
\textquotedblleft{}(i)$\Rightarrow$(ii)\textquotedblright{} By assumption for each $N$ there is $M$ such that for each $\epsilon>0$ there is a bounded subset $B$ of $X$ with $B_M\subseteq B+\epsilon B_N$. In order to show that $X$ is reduced, we fix $N$ and choose $M$ as in the condition above. Then $B_M\subseteq X+\epsilon B_N$ holds for each $\epsilon>0$ that is $B_M\subseteq \bar{X}^{X_N}$ and thus $X_M\subseteq \bar{X}^{X_N}$.
\smallskip
\\\textquotedblleft{}(ii)$\Rightarrow$(iii)\textquotedblright{} For given $N$ we choose $M>N$ such that $X_M\subseteq \bar{X}^{X_N}$. Let $x\in B_M$. We have $x\in \bar{X}^{X_N}$. Since $B_M\subseteq B_N$ we also have $x\in B_N$. Hence $x\in B_N\cap \bar{X}^{X_N}$. We claim $x\in \bar{B_N\cap X}^{X_N}$. If $x$ is in the interior of $B_{N}$ in $X_N$, we choose a sequence $(x_j)_{j\in\mathbb{N}}\subseteq X$ with $x_j\rightarrow x$ in $X_N$. There exists $J$ such that $x_j\in B_N$ for all $j\geqslant J$. Hence $(x_j)_{j\geqslant J}\subseteq B_N\cap X$ with $x_j\rightarrow x$ in $X_N$ and $x\in\bar{B_N\cap X}^{X_N}$. If otherwise $\|x\|_N=1$, take $(x_j)_{j\in\mathbb{N}}\subseteq X$ with $x_j\rightarrow x$ in $X_N$. We put $y_j:=\frac{x_j}{\|x_j\|_N}$. Then $(y_j)_{j\in\mathbb{N}}\subseteq B_N\cap X$, $y_j\rightarrow \frac{x}{\|x\|_N}=\frac{x}{1}=x$, hence $x\in \bar{B_N\cap X}^{X_N}$.
\smallskip
\\\textquotedblleft{}(iii)$\Rightarrow$(ii)\textquotedblright{} This follows from Proposition \ref{REM-2}.
\smallskip
\\The last statement is now clear.
\end{proof}

\section{Weighted spaces of continuous functions}\label{Applications}

In this section we apply the  criterion in Theorem \ref{THM} to weighted PLB-spaces of continuous functions. The main reference for this section is the article \cite{ABB2009} of Agethen, Bierstedt, Bonet which is an extended and reorganized version of part of the thesis of Agethen \cite{Agethen2004}. In order to present the applications and examples we introduce some notation.
\smallskip
\\Let $X$ denote a locally compact and $\sigma$-compact topological space. By $C(X)$ we denote the space of all continuous functions on $X$ and by $C_c(X)$ the space of all continuous functions on $X$ with compact support. A \textit{weight} is a strictly positive and continuous function on $X$. For a weight $a$ we define the \textit{weighted Banach spaces of continuous functions}
\begin{align*}
Ca(X)&:=\big\{f\in C(X)\:;\:\|f\|_a:=\sup_{x\in X}a(x)|f(x)|<\infty\big\},\\
Ca_0(X)&:=\big\{f\in C(X)\:;\:a|f|\text{ vanishes at } \infty \text{ on } X\big\}.
\end{align*}
Recall that a function $g\colon X\rightarrow \mathbb{R}$ is said to vanish at $\infty$ on $X$ if for each $\epsilon>0$ there is a compact set $K$ in $X$ such that $|g(x)|<\epsilon$ for all $x\in X\backslash{}K$. The space $Ca(X)$ is a Banach space for the norm $\|\cdot\|_a$ and $Ca_0(X)$ is a closed subspace of $Ca(X)$. In the first case we speak of \textit{O-growth conditions} and in the second of \textit{o-growth conditions}.
\smallskip
\\Let now $\mathcal{A}=((a_{N,n})_{N\in\mathbb{N}})_{n\in\mathbb{N}}$ be a double sequence of weights on $X$ which is decreasing in $n$ and increasing in $N$, i.e.~$a_{N,n+1}\leqslant{}a_{N,n}\leqslant{}a_{N+1,n}$ holds for all $N$ and $n$. We define the norms $\|\cdot\|_{N,n}:=\|\cdot\|_{a_{N,n}}$ and get $Ca_{N,n}(X)\subseteq{}Ca_{N,n+1}(X)$ and $C(a_{N,n})_0(X)\subseteq{}C(a_{N,n+1})_0(X)$ with continuous inclusion for each $N$ and $n$. Therefore we can define for each $N$ the \textit{weighted LB-spaces of continuous functions}
$$
\mathcal{A}_NC(X):=\ind{n}Ca_{N,n}(X) \;\; \text{ and } \;\; \mathcal({A}_N)_0C(X):=\ind{n}C(a_{N,n})_0(X).
$$
Since Bierstedt, Bonet \cite[Section 1]{BB1991} implies that the spaces $\mathcal{A}_NC(X)$ are always complete we may assume that every bounded set in $\mathcal{A}_NC(X)$ is contained in $B_{N,n}$ for some $n$ where $B_{N,n}$ denotes the unit ball of $Ca_{N,n}(X)$. The space $(\mathcal{A}_N)_0C(X)$ needs not to be regular. By \cite[Theorem 2.6]{BMS1982} it is regular if and only if it is complete and this is equivalent to the fact that the sequence $\mathcal{A}_N:=(a_{N,n})_{n\in\mathbb{N}}$ is regularly decreasing (see \cite[Definition 2.1 and Theorem 2.6]{BMS1982}). We set $B_{N,n}^{\circ}$ for the unit ball of $C(a_{N,n})_0(X)$. Let us denote by $\mathcal{A}C=(\mathcal{A}_NC(X))_{N}$ and $\mathcal{A}_0C=((\mathcal{A}_N)_0C(X))_{N}$ the projective spectra of LB-spaces where the linking maps are just the inclusions. To complete our definition, we define the \textit{weighted PLB-spaces of continuous functions} by taking projective limits, i.e.~we put
$$
AC(X):=\proj{N}\mathcal{A}_NC(X) \;\; \text{ and } \;\; (AC)_0(X):=\proj{N}\mathcal({A}_N)_0C(X).
$$
By Bierstedt, Meise, Summers \cite[Corollary 1.4.(a)]{BMS1982} $(\mathcal{A}_N)_0C(X)\subseteq\mathcal{A}_NC(X)$ is a topological subspace for each $N$ and hence $(AC)_0(X)$ is a topological subspace of $AC(X)$. Moreover, $\mathcal{A}_0C$ is reduced in the sense that $(AC)_0(X)$ is dense in every step (cf.~\cite[Section 2]{ABB2009}).
\smallskip
\\In \cite{Vogt1992} Vogt introduced the conditions (Q) and (wQ). In the case of weighted PLB-spaces one can reformulate these conditions in terms of the weights as follows. We say that the sequence $\mathcal{A}$ satisfies condition (Q) if
$$
\textstyle\forall \: N \; \exists \: M,\, n \; \forall \: K,\, m,\, \epsilon > 0 \; \exists \: k, \, S>0 : \frac{1}{a_{M,m}} \leqslant \max\big(\frac{\epsilon}{a_{N,n}},\frac{S}{a_{K,k}}\big),
$$
we say that it satisfies (wQ) if
$$
\textstyle\forall \: N \; \exists \: M,\, n \; \forall \: K,\, m \; \exists \: k, \, S>0 : \frac{1}{a_{M,m}} \leqslant \max\big(\frac{S}{a_{N,n}},\frac{S}{a_{K,k}}\big).
$$
It is clear that condition (Q) implies condition (wQ). Bierstedt, Bonet gave in \cite{BB1994}  examples of sequences  satisfying (wQ) but not (Q).
\smallskip
\\One of the main tasks in \cite{ABB2009} was the investigation of locally convex properties of the spaces $AC(X)$ and $(AC)_0(X)$. For this purpose Agethen, Bierstedt, Bonet used the above weight conditions in order to characterize the vanishing of the functor $\Projeins$ on the spectra $\mathcal{A}C$ and $\mathcal{A}_0C$. We state their results.

\begin{thmA}\label{THM-A} (\cite[Theorem 3.7]{ABB2009}) The following conditions are equivalent.\vspace{3pt}
\\\begin{tabular}{rlrl}\vspace{2pt}
(i)  &$\Projeins \mathcal{A}_0C=0$.          & (iii) &$(AC)_0(X)$ is barrelled.               \\
(ii) &$(AC)_0(X)$ is ultrabornological.      & (iv)  &$\mathcal{A}$ satisfies condition (wQ). \\
\end{tabular}
\end{thmA}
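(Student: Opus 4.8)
The plan is to prove the cycle (i) $\Rightarrow$ (ii) $\Rightarrow$ (iii) $\Rightarrow$ (iv) $\Rightarrow$ (i), handling the first three conditions jointly with general theory and bringing in the weight condition (iv) through Theorem~\ref{THM}; the genuinely new, non-homological, input is the equivalence of (iv) with \textquotedblleft{}$(AC)_0(X)$ is bornological\textquotedblright{}. Since $(AC)_0(X)$ is dense in every step, the spectrum $\mathcal{A}_0C$ is reduced in the classical sense, hence strongly reduced; for a strongly reduced spectrum of LB-spaces the vanishing of $\Projeins$, ultrabornologicity, barrelledness, quasibarrelledness and bornologicity of the limit are all equivalent (Wengenroth \cite[3.3.4, 3.3.6]{Wengenroth}; the implication $\Projeins=0\Rightarrow\text{ultrabornological}$ is in any case general, and $\text{barrelled}\Rightarrow\Projeins=0$ is where reducedness enters). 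Hence (i) $\Leftrightarrow$ (ii) $\Leftrightarrow$ (iii), and these are moreover equivalent to \textquotedblleft{}$(AC)_0(X)$ bornological\textquotedblright{}; it remains to show that the latter is equivalent to (iv).

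To apply Theorem~\ref{THM} we first record that $\mathcal{A}_0C$ satisfies (B1) --- and this is exactly the point at which the \emph{o}-growth case is more favourable than the $O$-growth case. Indeed $C_c(X)\subseteq(AC)_0(X)$, and $C_c(X)$ is norm-dense in each Banach step $C(a_{N,n})_0(X)$: truncate $f\in C(a_{N,n})_0(X)$ by a function in $C_c(X)$ which equals $1$ on a large compact set and takes values in $[0,1]$; such a truncation converges to $f$ in norm (because $a_{N,n}|f|$ vanishes at infinity) and does not increase the norm. Hence $B_{N,m}^{\circ}\cap(AC)_0(X)$ is norm-dense in $B_{N,m}^{\circ}$, so $B_{N,m}^{\circ}\subseteq\overline{B_{N,m}^{\circ}\cap(AC)_0(X)}^{\,C(a_{N,m})_0(X)}\subseteq\Bigcap{k\in\mathbb{N}}\big(B_{N,m}^{\circ}\cap(AC)_0(X)+{\textstyle\frac1k}B_{N,m}^{\circ}\big)$, which is (B1) with $M=N$, $n=m$. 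After the routine verification that the hypotheses of Theorem~\ref{THM} are met (in particular the regularity of the LB-steps), Theorem~\ref{THM} says that $(AC)_0(X)$ is bornological if and only if (B2) holds for every absolutely convex bornivorous $T\subseteq(AC)_0(X)$.

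It remains to identify \textquotedblleft{}(B2) for every absolutely convex bornivorous $T$\textquotedblright{} with (wQ). For the direction (iv) $\Rightarrow$ bornological one uses a partition-of-unity splitting: given $N$, (wQ) supplies $M$ and $n$, and given $K,m$ it supplies $k,S$ so that the open sets $\{a_{N,n}<2Sa_{M,m}\}$ and $\{a_{K,k}<2Sa_{M,m}\}$ cover $X$; choosing a continuous partition of unity $1=\varphi+\psi$ subordinate to this cover and writing $f=\varphi f+\psi f$ yields $B_{M,m}^{\circ}\subseteq 2S(B_{N,n}^{\circ}+B_{K,k}^{\circ})$, i.e.\ $\mathcal{A}_0C$ satisfies condition $\text{(P}_{\!2}\text{)}$; chasing the quantifiers of (wQ) and using that a bornivorous set absorbs precisely the bounded sets, one deduces that every absolutely convex bornivorous $T$ satisfies (B2). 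For the converse I argue by contraposition: from the failure of (wQ) I build an absolutely convex bornivorous (in fact closed) set $T\subseteq(AC)_0(X)$ for which (B2) fails, so that $(AC)_0(X)$ is not bornological, hence --- being strongly reduced --- not barrelled. Negating (wQ) produces one index $N_0$ such that for every candidate pair $(M,n)$ there are $K_{M,n}$, $m_{M,n}$ and points $x_j^{M,n}\in X$ at which $1/a_{M,m_{M,n}}$ exceeds both $j/a_{N_0,n}$ and $j/a_{K_{M,n},k}$, for all $k$, simultaneously; since there are only countably many pairs $(M,n)$ one assembles these data into a single closed absolutely convex absorbing set --- a countable intersection of sublevel sets formed along the points $x_j^{M,n}$ using the weights $a_{N_0,\bullet}$ and $a_{K_{M,n},\bullet}$ --- which absorbs every $B_{N,n}^{\circ}$ (so it is a barrel, in particular bornivorous) while $B_{N_0,n}^{\circ}\cap(AC)_0(X)\not\subseteq ST$ for every $n$ and every $S$; since the witness $N_0$ is forced, (B2) fails for $T$.

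The step I expect to be the genuine obstacle is this converse construction: packing the countably many defects produced by the negation of (wQ) into one closed absolutely convex absorbing set, and then checking \emph{simultaneously} that it absorbs all the unit balls $B_{N,n}^{\circ}$ (so that it really is a barrel) and that (B2) nonetheless fails --- this forces a delicate balancing of the growth of the weights against the chosen points. A secondary, bookkeeping-type difficulty in the forward direction is to determine exactly which intersections $B_{N,n}^{\circ}\cap(AC)_0(X)$ are bounded subsets of $(AC)_0(X)$ and to route the quantifiers of (wQ) through the partition-of-unity splitting accordingly.
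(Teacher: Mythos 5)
Your overall architecture (reduce everything to ``$(AC)_0(X)$ bornological'' and then try to capture that by Theorem~\ref{THM}) is reasonable, but it diverges from what the paper actually does at a point where the divergence is fatal. The paper does not apply Theorem~\ref{THM} to the spectrum $\mathcal{A}_0C$ at all: it applies it to $(AC)_c(X)=\proj{N}\ind{n}C(a_{N,n})_c(X)$, the projective limit built from the \emph{normed} spaces of compactly supported functions, precisely because $\ind{n}C(a_{N,n})_c(X)$ is automatically a regular inductive limit ($C_{N,n}=B_{N,n}\cap C_c(X)$), and then transfers quasibarrelledness to the dense topological subspace $C_c(X)\subseteq (AC)_0(X)$ and upgrades to barrelledness via reducedness and Vogt's lemma. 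Your ``routine verification that the hypotheses of Theorem~\ref{THM} are met (in particular the regularity of the LB-steps)'' is not routine: as the paper points out, $(\mathcal{A}_N)_0C(X)$ is regular if and only if it is complete, if and only if $\mathcal{A}_N$ is regularly decreasing, and none of this is assumed. Regularity is genuinely used in Lemma~\ref{LEM-1} (the balls $B_{M,m}$ must form a fundamental system of bounded sets of $X_M$ for the absorption of all $B_{M,m}$ by $T_0$ to yield a $0$-neighborhood), so Theorem~\ref{THM} simply does not apply to $\mathcal{A}_0C$ in general. Your verification of (B1) for $\mathcal{A}_0C$ by truncation is fine (it is Proposition~\ref{REM-4}), but it does not rescue the application.

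There are two further gaps in the part you yourself flag as delicate. In the forward direction, a partition-of-unity splitting gives decompositions of a \emph{single} ball $B^{\circ}_{M,m}$ into balls from two other steps, but condition (B2) requires one $N$ that works for \emph{all} $n$ simultaneously; the paper's proof of Proposition~\ref{PROP-3} needs the strengthening $\text{(wQ)}^{\star}$ of (wQ) (one sequence $(n(\sigma))$ serving all steps), Lemma~\ref{LEM-3} (a pigeonhole argument producing finite intersections of balls absorbed by a bornivorous $T$), and the decomposition lemma of \cite{ABB2009} to split $C_{M,m(M)}$ into such finite intersections -- none of which your quantifier chase supplies. In the converse direction your construction has a quantifier error: negating (wQ) produces one bad index $N_0$, whereas negating (B2) requires failure of absorption for \emph{every} $N$; since the balls shrink as $N$ grows ($B_{N+1,n}\subseteq B_{N,n}$), failure at $N_0$ says nothing about $N>N_0$, so ``the witness $N_0$ is forced'' does not close the argument. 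The paper sidesteps this entirely by citing \cite[Theorem 3.7]{ABB2009} for the implication (iii)$\Rightarrow$(iv); if you want a self-contained proof of that direction you need an actual construction, not the sketch given.
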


\begin{thmB}\label{THM-B} (\cite[Theorems 3.5 and 3.8]{ABB2009}) Consider the following conditions:  \vspace{3pt}
\\\begin{tabular}{rlrl}\vspace{2pt}
(i)  &$\mathcal{A}$ satisfies condition (Q),\hspace{19.8pt} & (iv)  &$AC(X)$ is barrelled,                   \\\vspace{2pt}
(ii) &$\Projeins \mathcal{A}C=0$,                           & (v)   &$\mathcal{A}$ satisfies condition (wQ). \\
(iii)&$AC(X)$ is ultrabornological,                         &                                                \\
\end{tabular}
\smallskip
\\Then (i)$\Leftrightarrow$(ii)$\Rightarrow$(iii)$\Rightarrow$(iv)$\Rightarrow$(v).
\end{thmB}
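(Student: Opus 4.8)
I would treat the five arrows separately; two of them come essentially for free, and the substance of the theorem lies in the equivalence (i)$\Leftrightarrow$(ii) and in the implication (iv)$\Rightarrow$(v). Indeed, (iii)$\Rightarrow$(iv) is just the fact that every ultrabornological locally convex space is barrelled, while (ii)$\Rightarrow$(iii) is the general principle recalled in the Introduction: for every countable projective spectrum of LB-spaces the vanishing of $\Projeins$ forces the limit to be ultrabornological (see \cite[Section~3.3]{Wengenroth}); here one only has to observe that $\mathcal{A}C$ is a spectrum of regular LB-spaces, which has been arranged above.

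For (i)$\Leftrightarrow$(ii) I would pass through the Braun--Vogt condition $\text{(}\overline{\text{P}_{\!2}}\text{)}$. The bridge is the elementary dictionary for weighted sup-normed spaces: for weights $a,b,c$ on $X$ and $\epsilon,S>0$ one has $B_a\subseteq\epsilon B_b+SB_c$ in $C(X)$ if and only if $\tfrac{1}{a}\leqslant\tfrac{\epsilon}{b}+\tfrac{S}{c}$ pointwise on $X$, the reverse implication being an elementary truncation argument (cf.\ \cite[Section~1]{BMS1982} or \cite[Section~1]{BB1991}). As $\tfrac{\epsilon}{b}+\tfrac{S}{c}$ and $\max(\tfrac{\epsilon}{b},\tfrac{S}{c})$ differ only by the factor $2$ and the quantifier blocks of (Q) and of $\text{(}\overline{\text{P}_{\!2}}\text{)}$ coincide line by line, condition (Q) for $\mathcal{A}$ is equivalent to $\text{(}\overline{\text{P}_{\!2}}\text{)}$ for the spectrum $\mathcal{A}C$. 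The theorem of Braun--Vogt quoted before Proposition~\ref{PROP-1} then yields $\text{(}\overline{\text{P}_{\!2}}\text{)}\Rightarrow\Projeins\mathcal{A}C=0$, which is (i)$\Rightarrow$(ii). For the converse the concrete structure must be exploited: from $\Projeins\mathcal{A}C=0$ one extracts, as in the proof of Proposition~\ref{REM-1}, a Banach-disc estimate on the balls $B_{N,n}$, and, combining the characterization of the vanishing of $\Projeins$ for spectra of LB-spaces (Vogt, Braun--Vogt) with the regularity of the steps $\mathcal{A}_N C(X)$, one translates this estimate back through the dictionary into the pointwise inequality defining (Q).

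The implication (iv)$\Rightarrow$(v) is the main obstacle, and I would prove it by contraposition: assuming that $\mathcal{A}$ fails (wQ), produce a barrel in $AC(X)$ which is not a $0$-neighbourhood. The decisive preliminary point is that the second half of the proof of Lemma~\ref{LEM-1}, i.e.\ the implication ``$T$ is a $0$-neighbourhood $\Rightarrow$ (B2)'', uses nothing beyond the facts that $AC(X)$ is a topological subspace of each $\mathcal{A}_N C(X)$ and that the balls $B_{N,n}$ are bounded there; hence it is available for $\mathcal{A}C$ unconditionally. Consequently, if $AC(X)$ were barrelled, every absolutely convex closed absorbing set $T\subseteq AC(X)$ would satisfy (B2), that is $\exists\,N\;\forall\,n\;\exists\,S>0\colon B_{N,n}\cap X\subseteq ST$. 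It thus suffices to construct, driven by the negation of (wQ) at the index $N_0$ where it fails, an absolutely convex, closed, absorbing set $T\subseteq AC(X)$ for which (B2) breaks down, i.e.\ such that for every $N$ there is $n$ with $B_{N,n}\cap X\not\subseteq ST$ for all $S>0$. This is a sliding-hump construction along a sequence of ``bad points'' escaping to infinity in the $\sigma$-compact space $X$: the negation of (wQ), used with the index $M$ of (wQ) running over all levels, supplies at each stage a function in the relevant unit ball $B_{N,n}$ carrying a spike at the new bad point that is too large to lie in any fixed multiple $ST$ of $T$. The delicate point, which I expect to cost the most work, is to balance the heights of these spikes against the weights so that $T$ remains absorbing on \emph{all} of $AC(X)$ while still defeating (B2); once such a $T$ is available it is a barrel that is not a $0$-neighbourhood, so $AC(X)$ is not barrelled, and the chain closes.
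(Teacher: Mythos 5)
This statement is quoted verbatim from \cite[Theorems 3.5 and 3.8]{ABB2009}; the paper gives no proof of it, so there is no internal argument to compare yours against. Judged on its own merits, your architecture is right for the easy arrows ((iii)$\Rightarrow$(iv) is standard, (ii)$\Rightarrow$(iii) is \cite[Theorem 3.3.4]{Wengenroth} applied to the regular LB-spectrum $\mathcal{A}C$), and your route (i)$\Rightarrow$(ii) is sound: the dictionary between $\frac{1}{a_{M,m}}\leqslant\max(\frac{\epsilon}{a_{N,n}},\frac{S}{a_{K,k}})$ and $B_{M,m}\subseteq 2\epsilon B_{N,n}+2S B_{K,k}$ (via the decomposition lemma \cite[Lemma 3.1]{ABB2009} and evaluation at $\frac{1}{a_{M,m}}\in B_{M,m}$) identifies (Q) with $\text{(}\overline{\text{P}_{\!2}}\text{)}$ for $\mathcal{A}C$, and Braun--Vogt then gives $\Projeins\mathcal{A}C=0$. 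The two remaining arrows, however, contain genuine gaps.

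For (ii)$\Rightarrow$(i), ``translating the Banach-disc estimate back through the dictionary'' does not produce (Q). What $\Projeins\mathcal{X}=0$ yields abstractly is an inclusion of the form $B_{M,m}\subseteq S(B_{N,n}+B_{K,k})$ (a $\text{(P}_{\!2}\text{)}$-type condition) together with strong reducedness; through the dictionary this gives only $\frac{1}{a_{M,m}}\leqslant S(\frac{1}{a_{N,n}}+\frac{1}{a_{K,k}})$, i.e.\ a (wQ)-type estimate with a constant $S$, not an arbitrary $\epsilon$, in front of $\frac{1}{a_{N,n}}$. Upgrading $S$ to $\epsilon$ is precisely the nontrivial step, and it is what Proposition \ref{OBS-1} of this paper is devoted to: one must first convert the reducedness information into the condition $\text{(}\overline{\text{wS}}\text{)}$ involving the associated system $\bar{A}$ of weights, and then interpolate it with the (wQ)-type estimate. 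Your sketch omits this idea entirely, so as written the converse direction does not close.

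For (iv)$\Rightarrow$(v), your reduction is logically correct (the ``$\Leftarrow$'' half of Lemma \ref{LEM-1} is unconditional, so a barrel violating (B2) cannot be a $0$-neighbourhood), but the entire content of the implication is the construction of that barrel, and you explicitly leave the decisive point --- making $T$ absorbing on all of $AC(X)$ while the spikes defeat every $B_{N,n}$ --- unresolved. Note also that the negation of (wQ) gives you, for the bad $N$ and \emph{each} pair $(M,n)$, a pair $(K,m)$ and for each $(k,S)$ a bad point; organizing these into a single sequence escaping to infinity that works simultaneously for all $n$ (as required to defeat (B2), which quantifies $\forall\,n$) is not automatic. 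The standard proof avoids the construction altogether: barrelledness of the limit of an LB-spectrum forces, by Vogt's theorem \cite[Theorem 3.3.6]{Wengenroth}, the ball-inclusion form of (wQ), which one then evaluates at $\frac{1}{a_{M,m}}$ to obtain the weight inequality. As it stands, your proposal establishes (i)$\Rightarrow$(ii)$\Rightarrow$(iii)$\Rightarrow$(iv) but leaves (ii)$\Rightarrow$(i) and (iv)$\Rightarrow$(v) incomplete.
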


\subsection{A non-homological proof for the barrelledness of $\text{(AC)}_{\text{0}}\text{(X)}$}

We give an alternative proof of the implication \textquotedblleft{}(iv)$\Rightarrow$(iii)\textquotedblright{} in Theorem A by replacing the machinery of $\Projeins$, which was used in the original proof of Agethen, Bierstedt, Bonet, by a method based on the criterion in Theorem \ref{THM}.

For a given double sequence $\mathcal{A}$ we consider the normed spaces $C(a_{N,n})_c(X):=(C_c(X),\|\cdot\|_{N,n})$ and $(AC)_c(X):=\proj{N}\ind{n}C(a_{N,n})_c(X)$.
We denote by $C_{N,n}$ the closed unit ball of $C(a_{N,n})_c(X)$. Since $C_{N,n} = B_{N,n} \cap C_c(X)$, it follows that $\ind{n}C(a_{N,n})_c(X)$ is a regular inductive limit of normed spaces.
For the proof of Proposition \ref{PROP-3} we need the following technical lemma.

\begin{lem}\label{LEM-3} Let $X=\proj{N}\ind{n}X_{N,n}$ with normed spaces $X_{N,n}$ and let $B_{N,n}$ denote the unit ball of $X_{N,n}$. Let $T\subseteq X$ be absolutely convex and bornivorous and $(n(N))_{N\in\mathbb{N}}\subseteq\mathbb{N}$ be arbitrary. Then there exists $N'\in\mathbb{N}$ such that $\Bigcap{N=1}^{N'}B_{N,n(N)}$ is absorbed by $T$.
\end{lem}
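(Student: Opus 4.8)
\ The plan is to argue by contradiction via a simple scaling argument. Suppose the assertion fails. Then for \emph{every} $N'\in\mathbb{N}$ the absolutely convex set $A_{N'}:=X\cap\Bigcap{N=1}^{N'}B_{N,n(N)}$ is not absorbed by $T$; in particular $A_{N'}\not\subseteq N'T$, so I would pick a point $x_{N'}\in A_{N'}$ with $x_{N'}\notin N'T$ (such a point exists, and each $A_{N'}$ is nonempty, for otherwise $T$ would trivially absorb it).

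The key observation will be that $B:=\{\,x_{N'}:N'\in\mathbb{N}\,\}$ is a \emph{bounded} subset of $X$. Since $X$ carries the projective limit topology, it is enough to verify that $B$ is bounded in each step $X_{N_0}=\ind{n}X_{N_0,n}$. So I would fix $N_0$ and note that for every $N'\geqslant N_0$ one has $N_0\in\{1,\dots,N'\}$, whence $x_{N'}\in\Bigcap{N=1}^{N'}B_{N,n(N)}\subseteq B_{N_0,n(N_0)}$; and $B_{N_0,n(N_0)}$, being the unit ball of the normed space $X_{N_0,n(N_0)}$ whose inclusion into $X_{N_0}$ is continuous, is bounded in $X_{N_0}$. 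The finitely many remaining points $x_{N'}$ with $N'<N_0$ add at most a bounded (finite) set. Hence $B$ is bounded in $X_{N_0}$ for every $N_0$, thus in $X$.

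Now bornivorousness of $T$ provides $\mu>0$ with $B\subseteq\mu T$, i.e.\ $x_{N'}\in\mu T$ for all $N'$. Choosing any $N'>\mu$ and using that $T$ is absolutely convex, so that $\mu T\subseteq N'T$, I would obtain $x_{N'}\in N'T$, contradicting the choice of $x_{N'}$. Therefore some $A_{N'}$ must be absorbed by $T$, which is the claim.

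I do not expect a serious obstacle here; the argument is routine. The one point that must be handled correctly is the scaling in the choice of the bad points: one takes $x_{N'}$ outside $N'T$, with the factor tending to infinity, precisely so that it outruns the (uncontrolled) absorption constant $\mu$ furnished by bornivorousness. It is also worth noting that no regularity of the inductive limits $X_N$ enters: all that is used is the automatic boundedness of each $B_{N_0,n(N_0)}$ in $X_{N_0}$, together with the nesting $\Bigcap{N=1}^{N'}B_{N,n(N)}\subseteq B_{N_0,n(N_0)}$ for $N'\geqslant N_0$, which already makes the chosen points bounded without any rescaling.
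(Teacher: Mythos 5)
Your proof is correct and follows essentially the same route as the paper's: assume failure, pick $x_{N'}$ in the intersection but outside $N'T$, show the resulting set is bounded in $X$ (bounded in each step $X_{N_0}$ because the tail lies in $B_{N_0,n(N_0)}$ and the head is finite), and contradict bornivorousness by letting $N'$ outrun the absorption constant. The only cosmetic difference is that you intersect with $X$ from the start, which the paper leaves implicit.
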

\begin{proof} Assume that the conclusion does not hold. For each $N'$  there is  $x_{N'}\in \Bigcap{N=1}^{N'}B_{N,n(N)}\backslash N'T$. We put $B:=\{x_{N'}\:;\:N'\in\mathbb{N}\}$ and claim that $B$ is bounded in $X$. In order to show this, we fix $L$ and write $B=\{x_{N'}\:;\: 1\leqslant N'\leqslant L\}\cup \{x_{N'}\:;\: N'\geqslant L\}$. To show that $B\subseteq X$ is bounded it is enough to show the latter for $B':=\{x_{N'}\:;\: N'\geqslant L\}$. We claim that $B'\subseteq X_L$ and that $B'$ is bounded there. By definition each $x_{N'}\in B'$ lies in $\Bigcap{N=1}^{N'}B_{N,n(N)}$ and for $L\leqslant N'$ we have $\Bigcap{N=1}^{N'}B_{N,n(N)}\subseteq B_{L,n(L)}$
and the latter set is bounded in $X_L$. Hence the same holds for $B'$ and we have established the claim.  By our assumptions, $T$ is bornivorous. Hence there exists $\lambda>0$ such that $B\subseteq \lambda T$. For $N'\geqslant \lambda$, we get $x_{N'}\not\in N'T\supseteq \lambda T$, a contradiction.
\end{proof}

\begin{prop}\label{PROP-3} The following conditions are equivalent:
\begin{compactitem}\vspace{2pt}
\item[(i)] $\mathcal{A}$ satisfies condition (wQ).\vspace{3pt}
\item[(ii)] $(AC)_c(X)$ is bornological.\vspace{3pt}
\item[(iii)] $(AC)_0(X)$ is barrelled.\vspace{3pt}
\end{compactitem}
\end{prop}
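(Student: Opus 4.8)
The plan is to prove the cycle (iii)$\Rightarrow$(i)$\Rightarrow$(ii)$\Rightarrow$(iii). The equivalence (i)$\Leftrightarrow$(iii) is Theorem A, so in fact only (i)$\Rightarrow$(ii) and (ii)$\Rightarrow$(iii) require genuine work; I would nevertheless organize the write-up around the new space $(AC)_c(X)$, whose role is to serve as a dense topological subspace of $(AC)_0(X)$ to which Theorem \ref{THM} can be applied. The first step is to verify that the spectrum of normed spaces $\big(C(a_{N,n})_c(X)\big)$ satisfies hypothesis (B1), so that Theorem \ref{THM} is available. Here condition (wQ), rewritten in terms of the balls $C_{N,n}$, should give something like: for each $N$ there is $M$ so that for each $m$ there is $n$ with $C_{M,m}\subseteq \overline{C_{N,n}\cap (AC)_c(X)}^{\,\|\cdot\|_{N,n}}$, or the slightly weaker intersection form appearing in (B1); the density statement ``$(AC)_c(X)$ dense in each step'' plus the max-type weight inequality in (wQ) is exactly what produces such an approximation. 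I expect this verification of (B1) from (wQ) to be the technical heart of the argument, parallel to the proof of Proposition \ref{REM-2} but run in the reverse direction.

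For (i)$\Rightarrow$(ii): having (B1), Theorem \ref{THM} reduces bornologicity of $(AC)_c(X)$ to showing that every absolutely convex bornivorous $T\subseteq (AC)_c(X)$ satisfies (B2), i.e.\ there is $N$ such that for all $n$ the set $C_{N,n}\cap (AC)_c(X)$ is absorbed by $T$. This is where Lemma \ref{LEM-3} enters: given the $M$ produced by (B1) applied with a suitable starting index, and the sequence $n\mapsto n(N)$ dictated by (B1), Lemma \ref{LEM-3} yields an $N'$ such that $\bigcap_{N=1}^{N'} C_{N,n(N)}$ is absorbed by $T$; one then has to chase the (wQ)-inclusions to see that a single ball $C_{M,m}$ (for large enough $M$, $m$) is contained in a multiple of this finite intersection, whence $C_{M,m}$, and then by regularity every bounded set and in particular every $C_{N,n}\cap (AC)_c(X)$ for the relevant $N$, is absorbed by $T$. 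The bookkeeping of quantifiers here is delicate but routine once the shape of (B1) coming from (wQ) is pinned down.

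For (ii)$\Rightarrow$(iii): I would argue that $(AC)_c(X)$ is a dense topological subspace of $(AC)_0(X)$ — density because $C_c(X)$ is $\|\cdot\|_{a}$-dense in $Ca_0(X)$ for each weight $a$ (the standard fact that compactly supported functions are dense in the ``vanishing at infinity'' space), and the topological-subspace property because the norms defining both sides are literally the same $\|\cdot\|_{N,n}$ restricted to $C_c(X)$. A dense subspace of a locally convex space $E$ that is bornological forces $E$ to be, at least, barrelled: indeed a barrel in $(AC)_0(X)$ meets $(AC)_c(X)$ in a barrel there, which by bornologicity is a $0$-neighborhood in $(AC)_c(X)$, hence (using density and that the barrel is closed) its closure is a $0$-neighborhood in $(AC)_0(X)$; since the barrel is already closed it equals its closure. (Alternatively one invokes the general principle that a locally convex space having a dense bornological subspace is quasi-barrelled, combined with Theorem A's other equivalences, but the direct barrel argument is cleanest.) The main obstacle across the whole proof is the first step — extracting condition (B1) for the $C_c$-spectrum from the purely scalar weight condition (wQ) — because that is the one place where the specific structure of weighted spaces of continuous functions, rather than soft functional analysis, is used.
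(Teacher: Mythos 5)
Your overall architecture --- the cycle (iii)$\Rightarrow$(i)$\Rightarrow$(ii)$\Rightarrow$(iii) with Theorem A supplying (iii)$\Rightarrow$(i), Lemma \ref{LEM-3} driving (i)$\Rightarrow$(ii), and the dense topological subspace $C_c(X)\subseteq (AC)_0(X)$ driving (ii)$\Rightarrow$(iii) --- matches the paper. But you have misplaced the work in (i)$\Rightarrow$(ii), and two steps are genuine gaps. Condition (B1) for the $C_c$-spectrum does not need (wQ) at all: all the steps $C(a_{N,n})_c(X)$ coincide algebraically with $C_c(X)=(AC)_c(X)$, so $C_{N,n}\cap (AC)_c(X)=C_{N,n}$ and (B1) holds trivially with $M=N$, $n=m$. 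The weight condition is consumed entirely in verifying (B2), and there your sketch does not close. (B2) demands $\exists\,N\ \forall\, n$: \emph{every} ball $C_{N,n}$ is absorbed by $T$; your plan produces one absorbed ball $C_{M,m}$ and then tries to finish by saying that ``by regularity every bounded set and in particular every $C_{N,n}\cap(AC)_c(X)$ is absorbed.'' That last step is false: $C_{N,n}$ is not a bounded subset of $(AC)_c(X)$ (its elements have arbitrarily large supports, so it is not uniformly bounded in the other steps $\ind{n'}C(a_{M,n'})_c(X)$), and bornivorousness of $T$ gives nothing here --- if it did, (B2) would be automatic and Theorem \ref{THM} vacuous. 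The paper handles the $\forall\, n$ quantifier by contradiction: assume for every $M$ some $C_{M,m(M)}$ is \emph{not} absorbed; apply Lemma \ref{LEM-3} to the sequence $(m(\sigma))_\sigma$ to get a finite intersection $\bigcap_{\sigma=1}^{N}C_{\sigma,m(\sigma)}$ absorbed by $T$; upgrade (wQ) to $\text{(wQ)}^{\star}$ (Bierstedt--Bonet) and use the decomposition lemma of \cite{ABB2009} to obtain $C_{M,m(M)}\subseteq \tau_{K}\bigl[\bigcap_{\sigma=1}^{N}C_{\sigma,n(\sigma)}+\bigcap_{\mu=1}^{K}C_{\mu,k(\mu)}\bigr]$; and apply Lemma \ref{LEM-3} a \emph{second} time to the right-hand intersection to contradict the choice of $m(M)$. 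Your sketch has only one application of Lemma \ref{LEM-3} and no mechanism for handling the second summand.

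The direct barrel argument you call ``cleanest'' in (ii)$\Rightarrow$(iii) is also wrong: in a bornological space only \emph{bornivorous} barrels are $0$-neighborhoods; bornological does not imply barrelled (the space $c_{00}$ with the sup norm is normed, hence bornological, yet the barrel $\{x:|x_k|\leqslant 1/k \text{ for all } k\}$ is not a neighborhood of $0$). So the trace of a barrel on the dense bornological subspace $(AC)_c(X)$ need not be a $0$-neighborhood there, and density only yields that $(AC)_0(X)$ is \emph{quasi}barrelled. The paper upgrades quasibarrelled to barrelled by combining reducedness of the spectrum $\mathcal{A}_0C$ with Vogt's Lemma 3.1 of \cite{Vogt1989}; your parenthetical appeal to ``Theorem A's other equivalences'' does not supply this, since quasibarrelledness is not among them. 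Without that extra ingredient the implication (ii)$\Rightarrow$(iii) remains open in your write-up.
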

\begin{proof}
\textquotedblleft{}(i)$\Rightarrow$(ii)\textquotedblright{}
By Bierstedt, Bonet \cite{BB1994}, condition (wQ) implies condition $\text{(wQ)}^{\star}$ that is
$$
\exists\: (n(\sigma))_{\sigma\in\mathbb{N}}\subseteq \mathbb{N} \text{ increasing } \forall\:N\;\exists\:M\;\forall\:K,\,m\:\exists\:S>0,\,k\colon
$$
$$
{\textstyle\frac{1}{a_{M,m}}}\leqslant S\max\big({\textstyle\frac{1}{a_{K,k}}},\min_{\scriptscriptstyle\sigma=1,\dots,N}{\textstyle\frac{1}{a_{\sigma,n(\sigma)}}}\big).
$$
Observe that condition (B1) trivially holds for the natural spectrum corresponding to $(AC)_c(X)$. To see that $(AC)_c(X)$ is bornological, we apply Theorem \ref{THM}. It is then enough to show that condition (B2) is satisfied. To see this, fix an absolutely convex and bornivorous set $T\!\subseteq\!(AC)_c(X)$. Since $(AC)_c(X)=C(a_{N,n})_c(X)$ holds algebraically for all $N$, $n$ we may consider $T$ as a subset of the latter space and claim that there exists $N$ such that for each $n$ the ball $C_{N,n}$ is absorbed by $T$. We proceed by contradiction and hence assume that for each $M$ there exists $m(M)$ such that $C_{M,m(M)}$ is not absorbed by $T$. By Lemma \ref{LEM-3}, there exists $N$ such that $\Bigcap{\sigma=1}^{N}C_{\sigma,m(\sigma)}$ is absorbed by $T$. For the sequence $(n(\sigma))_{\sigma\in\mathbb{N}}$ and this $N$ we choose $M$ as in $\text{(wQ)}^{\star}$. Now we put $m=m(M)$ into $\text{(wQ)}^{\star}$. Then for each $K$ there exist $S_K>0$ and $k(K)$ such that $\frac{1}{a_{M,m(M)}}\leqslant S_K\max(\frac{1}{a_{K,k(K)}},\min_{\sigma=1,\dots,N}\frac{1}{a_{\sigma,n(\sigma)}})$ holds. Defining $S_K':=\max_{\mu=1,\dots,K}S_{\mu}$, the latter yields
$$
{\textstyle\frac{1}{a_{M,m(M)}}}\leqslant S_K'\max({\textstyle\min_{\scriptscriptstyle\mu=1,\dots,K}{\textstyle\frac{1}{a_{\mu,k(\mu)}}}},{\textstyle\min_{\scriptscriptstyle\sigma=1,\dots,N}}{\textstyle\frac{1}{a_{\sigma,n(\sigma)}}});
$$
for details we refer to \cite{DISS}. Now an application of the decomposition lemma \cite[Lemma 3.1]{ABB2009} to the above estimate provides that for each $K$ there exists $\tau_K>0$ such that the inclusion $C_{M,m(M)}\subseteq \tau_K[\Bigcap{\sigma=1}^N C_{\sigma,n(\sigma)}+\Bigcap{\mu=1}^KC_{\mu,k(\mu)}]$ is valid. Again we refer to \cite{DISS} for more details. Applying Lemma \ref{LEM-3} a second time, we get $K'$ such that $\Bigcap{\mu=1}^{K'}C_{\mu,k(\mu)}$ is absorbed by $T$. But now in the inclusion $C_{M,m(M)}\subseteq \tau_{K'}[\Bigcap{\sigma=1}^N C_{\sigma,n(\sigma)}+\Bigcap{\mu=1}^{K'}C_{\mu,k(\mu)}]$ the set on the left hand side is not 
absorbed by $T$ unlike the set on the right hand side, a contradiction.
\smallskip
\\\textquotedblleft{}(ii)$\Rightarrow$(iii)\textquotedblright{}
First observe that \cite[Lemma 5.1]{BB1988} implies that $C_c(X)\subseteq(AC)_0(X)$ is a topological subspace, which is dense by \cite[Section 2]{ABB2009}. Therefore $(AC)_0(X)$ is quasibarrelled. Since the latter space is reduced by \cite[Section 2]{ABB2009} it follows from Vogt \cite[Lemma 3.1]{Vogt1989} that it is even barrelled.
\smallskip
\\\textquotedblleft{}(iii)$\Rightarrow$(i)\textquotedblright{} This is Theorem A (Theorem 3.7 in \cite{ABB2009}).
\end{proof}

\subsection{Condition (B1) revisited}

\begin{prop}\label{REM-3}\begin{compactitem}
\item[(a)] If $\mathcal{A}C$ satisfies (B1), then $\mathcal{A}$ satisfies (\underline{Q}),  that is for each $N$ there exists $M$ such that for each $m$ there exists $n$ such that for each $K$ and $\epsilon>0$ there exist $k$ and $S>0$ such that $\frac{1}{a_{M,m}}\leqslant\max(\frac{\epsilon}{a_{N,n}},\frac{S}{a_{K,k}})$ holds.\vspace{3pt}
\item[(b)] If $\mathcal{A}$ satisfies (Q) then the spectrum $\mathcal{A}C$ satisfies condition (B1) and $\mathcal{A}$ satisfies (\underline{Q}).
\end{compactitem}
\end{prop}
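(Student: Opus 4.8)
For part (a), the plan is to assume that $\mathcal{A}C$ satisfies (B1) and to deduce (\underline{Q}) by contradiction. Fix $N$ and let $M$ be the step provided by (B1). To verify (\underline{Q}) for this $M$, fix $m$, let $n$ be the index (B1) associates with $m$, so that $B_{M,m}\subseteq\bigcap_{i\in\mathbb{N}}\bigl(B_{N,n}\cap AC(X)+\tfrac1i B_{N,n}\bigr)$, fix $K$ and $\epsilon>0$, and suppose no $k,S>0$ satisfy $\tfrac1{a_{M,m}}\leqslant\max\bigl(\tfrac{\epsilon}{a_{N,n}},\tfrac{S}{a_{K,k}}\bigr)$. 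Then for every $k$ and every $j\in\mathbb{N}$ there is a point in the open set $E:=\{x:a_{N,n}(x)>\epsilon\,a_{M,m}(x)\}$ at which $a_{K,k}>j\,a_{M,m}$, so the superlevel set $E_{k,j}:=\{x\in E:a_{K,k}(x)>j\,a_{M,m}(x)\}$ is nonempty and, since $a_{K,k}/a_{M,m}$ is bounded on compact sets, not relatively compact; hence $E_{k,j}$ meets the complement of every compact subset of $X$. Fixing a compact exhaustion $X=\bigcup_p L_p$ and a bijection $p\mapsto(\kappa_p,j_p)$ of $\mathbb{N}$ onto $\mathbb{N}^2$, I would choose distinct points $y_p\in E_{\kappa_p,j_p}\setminus L_p$; then $Y=\{y_p:p\in\mathbb{N}\}$ is closed and discrete in $X$. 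Using local compactness, take pairwise disjoint open neighbourhoods $U_p\ni y_p$ with locally finite closures on which $a_{M,m}\leqslant 2\,a_{M,m}(y_p)$, and functions $\phi_p\in C_c(X)$ with $0\leqslant\phi_p\leqslant1$, $\supp\phi_p\subseteq U_p$ and $\phi_p(y_p)=1$. The test function will be $f:=\sum_p\tfrac1{2a_{M,m}(y_p)}\phi_p$, which is continuous and has $\|f\|_{M,m}\leqslant1$, so $f\in B_{M,m}$.

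The contradiction is then reached as follows. By (B1) write $f=g+h$ with $g\in B_{N,n}\cap AC(X)$ and $\|h\|_{N,n}\leqslant\tfrac1{i_0}$, where $i_0$ is fixed with $i_0\epsilon>4$. Since $g\in AC(X)\subseteq\mathcal{A}_KC(X)$, there are $l_0$ and $C_g>0$ with $|g|\leqslant C_g/a_{K,l_0}$. Evaluate everything at $y:=y_{p_0}$, where $p_0$ is the index with $(\kappa_{p_0},j_{p_0})=(l_0,j^{*})$ and $j^{*}>4C_g$: then $f(y)=\tfrac1{2a_{M,m}(y)}$, while $|g(y)|\leqslant C_g/a_{K,l_0}(y)<\tfrac1{4a_{M,m}(y)}$ (because $a_{K,l_0}(y)>j^{*}a_{M,m}(y)>4C_g\,a_{M,m}(y)$) and $|h(y)|\leqslant\tfrac1{i_0a_{N,n}(y)}<\tfrac1{4a_{M,m}(y)}$ (because $a_{N,n}(y)>\epsilon\,a_{M,m}(y)$ and $i_0\epsilon>4$), whence $\tfrac1{2a_{M,m}(y)}=|f(y)|\leqslant|g(y)|+|h(y)|<\tfrac1{2a_{M,m}(y)}$, which is absurd. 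The main obstacle is exactly the construction of one $f\in B_{M,m}$ whose failure to be $\|\cdot\|_{N,n}$-approximated by elements of $B_{N,n}\cap AC(X)$ survives no matter which step $\mathcal{A}_KC(X)$ the approximant $g$ lies in; this is what forces the diagonal choice of the $y_p$ over all pairs $(\kappa,j)$ and makes essential use of the local compactness and $\sigma$-compactness of $X$ (some of the routine topological verifications being of the type the paper delegates to \cite{DISS}).

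For part (b), the implication ``(Q)$\Rightarrow$(\underline{Q})'' is immediate: the index $n$ supplied by (Q) depends only on $N$, hence works for every $m$, which is precisely the quantifier pattern of (\underline{Q}). For ``(Q)$\Rightarrow$(B1)'', fix $N$, let $M_0,n_0$ be the witnesses of (Q), and, replacing $M_0$ by $\max(M_0,N)$, assume $M_0\geqslant N$. I claim (B1) holds with $M:=M_0$ and, for given $m$, with $n:=\max(m,n_0)$. Given $f\in B_{M_0,m}$ and $i\in\mathbb{N}$, apply (Q) with $\epsilon:=\tfrac1{2i}$ and each $K$ to obtain $k(K),S(K)$; then on the closed set $A:=\{a_{M_0,m}\geqslant2i\,a_{N,n_0}\}$ one has $a_{N,n_0}|f|\leqslant\tfrac1{2i}$, while on $X\setminus A$ the inequality of (Q) forces $a_{K,k(K)}|f|\leqslant S(K)$ for every $K$. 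Let $\psi:=\theta(a_{M_0,m}/a_{N,n_0})$ with $\theta\colon(0,\infty)\to[0,1]$ continuous, $\theta\equiv1$ on $(0,i]$ and $\theta\equiv0$ on $[2i,\infty)$, and set $g:=f\psi$, $h:=f(1-\psi)$. Then $|g|\leqslant|f|\leqslant1/a_{M_0,m}\leqslant1/a_{N,m}$ (using $N\leqslant M_0$), so $g\in B_{N,m}\subseteq B_{N,n}$; $g$ is supported in $\overline{X\setminus A}$, where $|g|\leqslant|f|\leqslant S(K)/a_{K,k(K)}$ for every $K$, so $g\in\bigcap_K\mathcal{A}_KC(X)=AC(X)$; and $h$ is supported where $a_{M_0,m}\geqslant i\,a_{N,n_0}$, on which $a_{N,n_0}|f|\leqslant\tfrac1i$, so $\|h\|_{N,n}\leqslant\|h\|_{N,n_0}\leqslant\tfrac1i$. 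Hence $f=g+h\in B_{N,n}\cap AC(X)+\tfrac1i B_{N,n}$, and letting $i$ vary gives (B1). The delicate point is getting $g$ into the projective limit $AC(X)$ rather than just into one step $\mathcal{A}_KC(X)$; this is arranged by letting the cutoff depend only on the quotient $a_{M_0,m}/a_{N,n_0}$, which keeps $\supp g$ inside the region where, by (Q), $f$ is already controlled in every step at once, and by the choice $n=\max(m,n_0)$, which matches the indices occurring in the estimates for $g$ and $h$. Alternatively one may organize the inequalities through the decomposition lemma \cite[Lemma~3.1]{ABB2009}, as in the proof of Proposition~\ref{PROP-3}.
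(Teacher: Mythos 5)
Your argument is correct in both parts, but in each part it takes a genuinely different route from the paper. For (a) the paper's proof is a one-line direct evaluation: the single function $\frac{1}{a_{M,m}}$ lies in $B_{M,m}$, so (B1) lets one write $\frac{1}{a_{M,m}}=f+\frac{\epsilon}{2}g$ with $f\in AC(X)$ and $g\in B_{N,n}$, and (\underline{Q}) drops out from $|f|\leqslant \lambda/a_{K,k}$, $|g|\leqslant 1/a_{N,n}$ and $a+b\leqslant 2\max(a,b)$. Your proof by contradiction, with the diagonal choice of points $y_p$ escaping every compact set and the bump-function test element of $B_{M,m}$, is a valid but much heavier substitute: all the topological bookkeeping (closed discreteness of $\{y_p\}$, local finiteness of the $U_p$, the non-relative-compactness of the sets $E_{k,j}$) is correct but is exactly what the direct evaluation at $\frac{1}{a_{M,m}}$ avoids. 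For (b) the situation is reversed: the paper's proof of \textquotedblleft{}(Q)$\Rightarrow$(B1)\textquotedblright{} is a pure citation (Theorem B gives $\Projeins\mathcal{A}C=0$, and Proposition \ref{REM-1} converts this into (B1) via Wengenroth's characterization), whereas you give a self-contained constructive decomposition $f=f\psi+f(1-\psi)$ with the cutoff $\psi=\theta(a_{M_0,m}/a_{N,n_0})$ depending only on the weight quotient. This is correct as written: the normalizations $M_0\geqslant N$ and $n=\max(m,n_0)$ do make $g=f\psi\in B_{N,n}\cap AC(X)$ (the support condition keeps $g$ controlled in every step $\mathcal{A}_KC(X)$ simultaneously) and $\|h\|_{N,n}\leqslant\frac{1}{i}$, and the choice of $n$ is independent of $i$ as (B1) requires. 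What your version buys is an elementary, non-homological proof of this implication, at the price of redoing by hand what Theorem B, Proposition \ref{REM-1} and the decomposition lemma \cite[Lemma 3.1]{ABB2009} already package.
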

\begin{proof}(a) We apply (B1) to conclude $B_{M,m}\subseteq \Bigcap{k\in\mathbb{N}}(B_{N,n}\cap AC(X)+{\textstyle\frac{1}{k}}B_{N,n})\subseteq \Bigcap{k\in\mathbb{N}}(AC(X)+{\textstyle\frac{1}{k}}B_{N,n})
=AC(X)+\Bigcap{k\in\mathbb{N}}{\textstyle\frac{1}{k}}B_{N,n}=AC(X)+\Bigcap{\epsilon>0}\epsilon B_{N,n}$. Now we fix $\epsilon>0$. Since $\frac{1}{a_{M,m}}\in B_{M,m}$, $\frac{1}{a_{M,m}}\in AC(X) +\frac{\epsilon}{2} B_{N,n}$. Thus there exist $f$ and $g$ such that $\frac{1}{a_{M,m}}=f+\frac{\epsilon}{2}g$ with $f\in AC(X)$ and $g\in B_{N,n}$. That is, for each $K$ there exists $k$ and $\lambda>0$ with $|f|\leqslant\frac{\lambda}{a_{K,k}}$ and $|g|\leqslant\frac{1}{a_{N,n}}$.  Then $\textstyle\frac{1}{a_{M,m}}=|f+\epsilon g|\leqslant |f|+\frac{\epsilon}{2}|g|\leqslant \frac{\lambda_K}{a_{K,k}}+\frac{\epsilon}{2a_{N,n}}\;\leqslant\max(\frac{2\epsilon}{2a_{N,n}},
\frac{2\lambda}{a_{K,k}})$, which yields condition (\underline{Q}) with $S:=2\lambda$.
\smallskip
\\(b) By Theorem B, (Q) is equivalent to $\Projeins \mathcal{A}C=0$. Thus Proposition \ref{REM-1} yields that (B1) holds. The implication \textquotedblleft{}$\text{(Q)}\Rightarrow\text{(\underline{Q})}$\textquotedblright{} is clear by definition.
\end{proof}

\begin{prop}\label{REM-4} $\mathcal{A}_0C$ satisfies condition (B1) in general, but even condition (wQ) need not  hold.
\end{prop}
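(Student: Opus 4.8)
The statement splits into two independent halves, and I would treat them separately. The first half — that the formula (B1) holds for the spectrum $\mathcal{A}_0C$ regardless of the double sequence $\mathcal{A}$ — I would prove by verifying (B1) with the most economical choices: $M:=N$ in the outer quantifier and $n:=m$ in the inner one. With these choices (B1) reduces to the single inclusion
$$B_{N,m}^{\circ}\subseteq\bigcap_{k\in\mathbb{N}}\Bigl(B_{N,m}^{\circ}\cap(AC)_0(X)+\tfrac1k B_{N,m}^{\circ}\Bigr),$$
so it suffices to show that $B_{N,m}^{\circ}\cap(AC)_0(X)$ is $\|\cdot\|_{N,m}$-dense in the unit ball $B_{N,m}^{\circ}$ of the Banach step $C(a_{N,m})_0(X)$ (the same elementary observation about closures that is used at the end of the proof of Proposition \ref{REM-1}). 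Since $C_c(X)\subseteq(AC)_0(X)$, it is in fact enough to prove that $C_c(X)\cap B_{N,m}^{\circ}$ is $\|\cdot\|_{N,m}$-dense in $B_{N,m}^{\circ}$.

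This is the familiar truncation argument for $o$-growth functions. Given $f\in B_{N,m}^{\circ}$, the function $a_{N,m}|f|$ vanishes at infinity on $X$, so for each $j\in\mathbb{N}$ the set $L_j:=\{x\in X\,;\,a_{N,m}(x)|f(x)|\geqslant 1/j\}$ is closed and contained in a compact subset of $X$, hence compact; using local compactness of $X$ one picks $\varphi_j\in C_c(X)$ with $0\leqslant\varphi_j\leqslant 1$ and $\varphi_j\equiv 1$ on $L_j$. Then $\varphi_jf\in C_c(X)\cap B_{N,m}^{\circ}$ because $\|\varphi_jf\|_{N,m}\leqslant\|f\|_{N,m}\leqslant 1$, and $\|f-\varphi_jf\|_{N,m}=\sup_{x\notin L_j}a_{N,m}(x)|f(x)|(1-\varphi_j(x))\leqslant 1/j$. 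Taking $j\geqslant k$ writes $f$ as an element of $B_{N,m}^{\circ}\cap(AC)_0(X)+\tfrac1k B_{N,m}^{\circ}$, which is exactly (B1). Note that no regularity of the steps $(\mathcal{A}_N)_0C(X)$ is needed here.

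For the second half I would invoke Theorem A: condition (wQ) holds for $\mathcal{A}$ precisely when $(AC)_0(X)$ is barrelled, equivalently $\Projeins\mathcal{A}_0C=0$. It therefore suffices to exhibit one double sequence $\mathcal{A}$ for which $(AC)_0(X)$ fails to be barrelled. Such a sequence must involve genuinely many steps — a spectrum with only finitely many distinct steps has as its limit an inductive limit of Banach spaces, which is always barrelled — and concrete examples, typically on $X=\mathbb{N}$ with the discrete topology and built from Köthe/Moscatelli-type matrices, for which $\Projeins\mathcal{A}_0C\neq0$, are available in the literature; see \cite{ABB2009} and the examples of Bierstedt, Bonet \cite{BB1994}. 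Feeding such an $\mathcal{A}$ into Theorem A shows that (wQ) can fail while (B1) always holds, so that for $\mathcal{A}_0C$ condition (B1) is strictly weaker than (wQ) — in contrast to its role for the $O$-growth spectrum $\mathcal{A}C$ in Proposition \ref{REM-3}.

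The verification of (B1) is routine once one spots the decisive point: although $B_{N,m}^{\circ}\cap(AC)_0(X)$ may be far smaller than $B_{N,m}^{\circ}$, it is still $\|\cdot\|_{N,m}$-dense in it, because the subspace $C_c(X)$ lies in every step $(\mathcal{A}_N)_0C(X)$, hence in $(AC)_0(X)$, and is $\|\cdot\|_{N,m}$-dense in $C(a_{N,m})_0(X)$ through truncations that do not increase the norm (so that they send $B_{N,m}^{\circ}$ into $B_{N,m}^{\circ}\cap C_c(X)$). The only place where external input is really required is the second half, namely producing a non-(wQ) double sequence; that I would take from \cite{ABB2009} rather than reprove.
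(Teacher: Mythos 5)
Your proof is correct and follows essentially the same route as the paper: choose $M:=N$, $n:=m$, and show that $B_{N,m}^{\circ}\cap(AC)_0(X)$ is norm-dense in $B_{N,m}^{\circ}$ by multiplying $f$ with compactly supported cutoff functions bounded by $1$ (the paper uses the net $(\alpha f)_{\alpha}$ over all $\alpha\in C_c(X)$ with $0\leqslant\alpha\leqslant 1$, you use explicit truncations adapted to the level sets of $a_{N,m}|f|$, which is the same idea with a quantitative rate), and then cite the literature for a sequence $\mathcal{A}$ failing (wQ). The detour through Theorem A for the second half is unnecessary --- failure of (wQ) is a direct property of the weight sequence, exhibited e.g.\ in \cite[Example 5.12]{DA} --- but harmless.
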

\begin{proof} To prove (B1) it is enough to select $M:=N$ and $n:=m$ and show $B_{N,n}^{\circ}\subseteq\bar{B_{N,n}^{\circ}\cap(AC)_0(X)}^{C(a_{N,n})_0(X)}$. Let $f\in B_{N,n}^{\circ}$, that is $a_{N,n}|f|$ vanishes at $\infty$ and $a_{N,n}|f|\leqslant 1$ on $X$. Define $S_{\alpha}\colon AC(X)\rightarrow(AC)_0(X),\;S_{\alpha}(f)(x):=\alpha(x)\cdot f(x)$, put $A:=\{\,\alpha\in C_c(X)\:;\:0\leqslant\alpha\leqslant1\,\}$, define $\alpha\leqslant\beta:\Leftrightarrow \alpha(x)\leqslant\beta(x)$ for each $x\in X$ and consider the net $(S_{\alpha}f)_{\alpha\in A}$. We have $S_{\alpha}f\in C(a_{N,n})_0(X)$. Since $a_{N,n}|S_{\alpha}f|\leqslant a_{N,n}|f|\leqslant 1$, we have $S_{\alpha}f\in B_{N,n}^{\circ}\cap(AC)_0(X)$. It is easy to see that $S_{\alpha}f\rightarrow f$ w.r.t.~$\|\cdot\|_{N,n}$.
\medskip
\\There are examples of sequences $\mathcal{A}$ which do not satisfy (wQ), cf.~\cite[Example 5.12]{DA}.
\end{proof}

The following result can be regarded as a concrete version of Proposition \ref{PROP-1}. For the proof we introduce the following condition which is inspired by work of Bierstedt, Meise, Summers \cite[Proposition 3.2]{BMS1982a}. The sequence $\mathcal{A}$ satisfies \textit{condition $\text{(}\overline{\text{wS}}\text{)}$} if
$$
\textstyle\forall\:M\;\exists\:M'\;\forall\:m\;\exists\:m'\;\forall\:\epsilon>0\;\exists\:\bar{a}\in \bar{A} \colon \frac{1}{a_{M',m}}\leqslant \bar{a}+\frac{\epsilon}{a_{M,m'}},
$$
where $\bar{A}:=\{\,\bar{a}\colon X\rightarrow\; ]0,\infty[\;;\:\bar{a}\in C(X) \text{ and } \forall\:N\;\exists\:n\colon \sup_{x\in X}a_{N,n}(x)\bar{a}(x)<\infty\,\}$.

\begin{prop}\label{OBS-1} The following conditions are equivalent.
\begin{compactitem}
\item[(i)] $\mathcal{A}$ satisfies condition (wQ) and $\mathcal{A}C$ satisfies (B1).\vspace{3pt}
\item[(ii)] $\mathcal{A}$ satisfies condition (Q).
\end{compactitem}
\end{prop}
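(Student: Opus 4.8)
The plan is to treat the two implications separately. The implication ``(ii)$\Rightarrow$(i)'' is immediate: by Proposition \ref{REM-3}(b) condition (Q) already forces the spectrum $\mathcal{A}C$ to satisfy (B1), and (Q) trivially implies (wQ) (put $\epsilon=1$ in (Q) and enlarge the constant). So the content lies entirely in ``(i)$\Rightarrow$(ii)'', which I would present as the concrete version of Proposition \ref{PROP-1}: (wQ) takes the role of $\text{(P}_{\!2}\text{)}$, condition (B1) that of $\text{(}\bar{\text{B1}}\text{)}$ (whose weight-language form is $\text{(}\overline{\text{wS}}\text{)}$), and (Q) that of $\text{(}\overline{\text{P}_{\!2}}\text{)}$.

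First I would record the dictionary between pointwise weight inequalities and ball inclusions in the LB-spectrum $\mathcal{A}C$. Using the decomposition lemma \cite[Lemma 3.1]{ABB2009} together with the regularity of the spaces $\mathcal{A}_NC(X)$, one checks --- up to multiplicative constants and rescalings of $\epsilon$, which are absorbed by the quantifiers ``$\exists\:S$'' and ``$\forall\:\epsilon$'' --- that (wQ) implies condition $\text{(P}_{\!2}\text{)}$ for $\mathcal{A}C$, that $\text{(}\overline{\text{P}_{\!2}}\text{)}$ for $\mathcal{A}C$ implies (Q), and that $\text{(}\overline{\text{wS}}\text{)}$ is the weight form of $\text{(}\bar{\text{B1}}\text{)}$ for $\mathcal{A}C$ (a bounded set $B\subseteq AC(X)$ occurring in $\text{(}\bar{\text{B1}}\text{)}$ corresponds to a set $\{\,w\in AC(X)\colon|w|\leqslant\bar a\,\}$ with $\bar a\in\bar A$). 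The translation between an inclusion $B_{M,m}\subseteq S(B_{N,n}+B_{K,k})$ and the estimate $\tfrac1{a_{M,m}}\leqslant\max\big(\tfrac S{a_{N,n}},\tfrac S{a_{K,k}}\big)$ is routine, the one non-formal point being that the summands must be continuous functions --- which is exactly what \cite[Lemma 3.1]{ABB2009} provides.

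The core step is that (B1) for $\mathcal{A}C$ implies $\text{(}\bar{\text{B1}}\text{)}$ for $\mathcal{A}C$. Reading (B1) with $N,M,n$ renamed to $M,M',m'$, it says: for each $M$ there is $M'$ such that for each $m$ there is $m'$ with $B_{M',m}\subseteq\Bigcap{k\in\mathbb{N}}\big(B_{M,m'}\cap AC(X)+\tfrac1kB_{M,m'}\big)$. Given $\epsilon>0$, this lets us write the (strictly positive) function $\tfrac1{a_{M',m}}$ as $f+\tfrac\epsilon2 g$ with $f\in AC(X)$ and $|g|\leqslant\tfrac1{a_{M,m'}}$, so that $\tfrac1{a_{M',m}}\leqslant|f|+\tfrac{\epsilon/2}{a_{M,m'}}$ --- which is already the pointwise estimate underlying $\text{(}\overline{\text{wS}}\text{)}$. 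For an arbitrary $h\in B_{M',m}$ we then have $|h|\leqslant\tfrac1{a_{M',m}}\leqslant|f|+\tfrac{\epsilon/2}{a_{M,m'}}$, and a partition-of-unity decomposition (the decomposition lemma once more) writes $h=u+v$ with $u,v$ continuous, $|u|\leqslant|f|$ and $|v|\leqslant\tfrac{\epsilon/2}{a_{M,m'}}$. Hence $v\in\epsilon B_{M,m'}$, while $u$ lies in $\{\,w\in AC(X)\colon|w|\leqslant|f|\,\}$, a set bounded in $AC(X)$ because $f\in AC(X)$; as $f$ and this bounded set may depend on $\epsilon$, we have obtained $\text{(}\bar{\text{B1}}\text{)}$. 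I expect this passage --- from the information about the single function $\tfrac1{a_{M',m}}$ that (B1) gives, to a statement uniform over the whole ball $B_{M',m}$ --- to be the main obstacle; it is here that the lattice structure of $C(X)$, through the continuous decomposition, is indispensable, and where the argument genuinely leaves the abstract LB-framework, for which (B1) does not imply $\text{(}\bar{\text{B1}}\text{)}$.

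To finish, $\mathcal{A}C$ is a projective spectrum of regular LB-spaces with inclusions as linking maps satisfying $\text{(P}_{\!2}\text{)}$ (by (wQ)) and $\text{(}\bar{\text{B1}}\text{)}$ (by the previous step), so Proposition \ref{PROP-1} gives that $\mathcal{A}C$ satisfies $\text{(}\overline{\text{P}_{\!2}}\text{)}$; by the dictionary this means that $\mathcal{A}$ satisfies (Q). This proves ``(i)$\Rightarrow$(ii)'' and completes the argument.
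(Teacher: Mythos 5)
Your proof is correct, but it routes the essential implication differently from the paper. For ``(ii)$\Rightarrow$(i)'' you do exactly what the paper does. For ``(i)$\Rightarrow$(ii)'' the paper never leaves the weight language: from (B1) it extracts only the scalar condition $\text{(}\overline{\text{wS}}\text{)}$, i.e.\ an estimate for the single test function $\frac{1}{a_{M',m}}\in B_{M',m}$, and then reruns the quantifier combination of Proposition \ref{PROP-1} directly on the two pointwise inequalities $\text{(}\overline{\text{wS}}\text{)}$ and (wQ), using at the end only that $\bar a\in AC(X)$ is dominated by some $\lambda/a_{K,k'}$; no decomposition lemma is needed at all in that step. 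You instead upgrade (B1) to the ball-level condition $\text{(}\bar{\text{B1}}\text{)}$ --- which is precisely the content of the paper's Corollary \ref{SCH-1}, proved there \emph{afterwards} from the $\text{(}\overline{\text{wS}}\text{)}$ estimate established inside the proof of Proposition \ref{OBS-1} --- translate (wQ) into $\text{(P}_{\!2}\text{)}$ and $\text{(}\overline{\text{P}_{\!2}}\text{)}$ back into (Q) via \cite[Lemma 3.1]{ABB2009}, and then invoke the abstract Proposition \ref{PROP-1} as a black box. Both arguments hinge on the same key move (testing the (B1)-inclusion on $\frac{1}{a_{M',m}}$), and there is no circularity in your ordering. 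What your route buys is genuine reuse of Proposition \ref{PROP-1} and the fact that Corollary \ref{SCH-1} comes out as the first step rather than as an afterthought; what it costs is an extra application of the decomposition lemma (to pass from the estimate for the single function $\frac{1}{a_{M',m}}$ to a decomposition of an arbitrary $h\in B_{M',m}$, where you should be explicit that the lemma only yields $|u|\leqslant 4|f|$ and $|v|\leqslant 2\epsilon/a_{M,m'}$, the constants being absorbed by starting from $\epsilon/4$) plus the routine translations between ball inclusions and weight inequalities, which the paper's purely scalar argument avoids.
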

\begin{proof}\textquotedblleft{}(i)$\Rightarrow$(ii)\textquotedblright{} Condition (B1) implies
$$
\forall\:M\;\exists\:M'\;\forall\:m\;\exists\:m'\;\forall\:\epsilon>0\colon B_{M',m}\subseteq AC(X)+\epsilon B_{M,m'}.
$$
We show that $\mathcal{A}$ satisfies $\text{(}\overline{\text{wS}}\text{)}$. For given $M$ select $M'$ and for given $m$ select $m'$ as in the condition above. Let $\epsilon>0$ be given. To show the estimate in $\text{(}\overline{\text{wS}}\text{)}$, we consider $\frac{1}{a_{M',m}}\in B_{M',m}$. There exist $a'\in AC(X)$ and $f\in B_{M,m'}$ such that $\frac{1}{a_{M',m}}=a'+\epsilon f$, hence $\frac{1}{a_{M',m}}=\big|\frac{1}{a_{M',m}}\big|\leqslant |a'|+\epsilon|f|\leqslant \bar{a}+\frac{\epsilon}{a_{M,m'}}$,  since $f\in B_{M,m'}$ and by selecting $\bar{a}:=|a'|$. We write (wQ) in the following way
$$
\textstyle\forall\:N\;\exists\:M,\,n\;\forall\:K\,\,m'\;\exists\:k,\,S>0\colon \frac{1}{a_{M,m'}}\leqslant S\big(\frac{1}{a_{N,n}}+\frac{1}{a_{K,k}}\big),
$$
and prove (Q) in the notation
$$
\textstyle\forall\:N\;\exists\:M',\,n\;\forall\:K,\,m\,,\epsilon>0\:\exists\:k',\,S'>0\colon \frac{1}{a_{M',m}}\leqslant\frac{\epsilon}{a_{N,n}}+\frac{S'}{a_{K,k'}}.
$$
Let $N$ be given. We choose $M$ and $n$ as in (wQ). We put $M$ into $\text{(}\overline{\text{wS}}\text{)}$ and obtain $M'$. Let $K$, $m$ and $\epsilon>0$ be given.  We put $m$ into $\text{(}\overline{\text{wS}}\text{)}$ and obtain $m'$. We put $m'$, $K$ and $\epsilon>0$ into (wQ) and obtain $k$ and $S>0$.  Finally, we put $\frac{\epsilon}{S}$ into $\text{(}\overline{\text{wS}}\text{)}$ and obtain $\bar{a}$. Now by (wQ) and $\text{(}\overline{\text{wS}}\text{)}$ we have the two estimates $\frac{1}{a_{M',m}}\leqslant \bar{a}+\frac{\epsilon}{S}\frac{1}{a_{M,m'}}$ and $\frac{1}{a_{M,m'}}\leqslant \frac{S}{a_{N,n}}+\frac{S}{a_{K,k}}$. Moreover, $\bar{a}\in AC(X)$ implies $\bar{a}\in \mathcal{A}_KC(X)$ and hence there exists $k'$ and $\lambda>0$ such that $a_{K,k'}\,\bar{a}\leqslant \lambda$ holds, we  may choose $k'\geqslant k$. Now it is enough to select $S':=\lambda+\epsilon$ in order to get the estimate in (Q).
\smallskip
\\\textquotedblleft{}(ii)$\Rightarrow$(i)\textquotedblright{} Clearly (Q) implies (wQ) and by Proposition \ref{REM-3}.(b), (Q) implies also (B1).
\end{proof}

\begin{cor}\label{SCH-1} If the spectrum $\mathcal{A}C$ satisfies (B1), then it also satisfies condition $(\bar{\text{B1}})$.
\end{cor}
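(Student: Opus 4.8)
The plan is to route the implication through condition $(\overline{\text{wS}})$. The first step is the observation that (B1) for $\mathcal{A}C$ already forces $(\overline{\text{wS}})$: this is exactly the first part of the proof of Proposition \ref{OBS-1}. Indeed, (B1) gives for each $\epsilon>0$ (choosing $k>1/\epsilon$ in the intersection) the inclusion $B_{M,m}\subseteq AC(X)+\epsilon B_{N,n}$, and applying it to the extremal function $1/a_{M,m}\in B_{M,m}$ one writes $\tfrac{1}{a_{M,m}}=a'+\epsilon f_0$ with $a'\in AC(X)$, $f_0\in B_{N,n}$, so that $\tfrac{1}{a_{M,m}}=|\tfrac{1}{a_{M,m}}|\leqslant|a'|+\tfrac{\epsilon}{a_{N,n}}$, and $\bar a:=|a'|$ lies in $\bar A$. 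Thus it remains to prove $(\overline{\text{wS}})\Rightarrow(\bar{\text{B1}})$ for the spectrum $\mathcal{A}C$. The quantifiers match up as follows: given $N$, feed it into $(\overline{\text{wS}})$ in the slot of its ``$M$'' and obtain $M$ (its ``$M'$''); given $m$, obtain $n$ (its ``$m'$''); given $\epsilon>0$, obtain $\bar a\in\bar A$ with $\tfrac{1}{a_{M,m}}\leqslant\bar a+\tfrac{\epsilon}{a_{N,n}}$ on $X$.

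Next I would exhibit the required bounded set and decomposition. Put $B:=\{g\in C(X)\;;\;|g|\leqslant\bar a \text{ on } X\}$. Since $\bar a\in\bar A$, for each $K$ there is $k$ with $C_K:=\sup_X a_{K,k}\bar a<\infty$, hence $B\subseteq C_K B_{K,k}$; so $B$ is a bounded subset of every step $\mathcal{A}_KC(X)$ and therefore bounded in $AC(X)$. Now fix $f\in B_{M,m}$, i.e.\ $|f|\leqslant 1/a_{M,m}$, and set
$$
g:=\frac{\bar a}{\max(\bar a,|f|)}\,f,\qquad h:=f-g .
$$
Because $\bar a$ is continuous and strictly positive, the denominator $\max(\bar a,|f|)$ is continuous and bounded below by $\bar a$, so $g\in C(X)$; moreover $|g|=\min(|f|,\bar a)\leqslant\bar a$, hence $g\in B$. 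Finally $|h|=(|f|-\bar a)^{+}\leqslant(\tfrac{1}{a_{M,m}}-\bar a)^{+}\leqslant\tfrac{\epsilon}{a_{N,n}}$ by the estimate from $(\overline{\text{wS}})$, so $\|h\|_{N,n}\leqslant\epsilon$, i.e.\ $h\in\epsilon B_{N,n}$. Thus $f=g+h\in B+\epsilon B_{N,n}$; since $f\in B_{M,m}$ was arbitrary, $B_{M,m}\subseteq B+\epsilon B_{N,n}$, which is precisely $(\bar{\text{B1}})$.

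The only genuinely delicate points are bookkeeping ones: verifying that the explicit ``truncation'' $g$ is continuous across the set $\{|f|=\bar a\}$ — this is where strict positivity of $\bar a$, guaranteed by $\bar a\in\bar A$, is used — and that the order interval $B$ is bounded in the projective limit $AC(X)$ rather than merely in one step. Conceptually, the corollary reflects the fact that in weighted spaces of continuous functions bounded sets are order-bounded, so the ``approximation by the subspace $X$'' produced by (B1) can be upgraded to ``approximation by a bounded subset of $X$''; this order structure is precisely the extra input that is absent in the general situation, where only the reverse implication $(\bar{\text{B1}})\Rightarrow(\text{B1})$ is available (cf.\ Proposition \ref{PROP-2}).
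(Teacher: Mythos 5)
Your proof is correct and follows essentially the same route as the paper's: both pass from (B1) to $(\overline{\text{wS}})$ exactly as in the proof of Proposition \ref{OBS-1}, and both then take for $B$ an order interval $\{g\in AC(X)\,;\,|g|\leqslant c\,\bar a\}$, which is bounded in $AC(X)$ because $\bar a\in\bar A$. The only difference is in a technical sub-step: where the paper invokes the decomposition lemma \cite[Lemma 3.5]{ABB2009} (feeding $\epsilon/4$ into $(\overline{\text{wS}})$ and taking $c=4$ to absorb the resulting factors of $2$), you construct the splitting $f=g+h$ explicitly via the truncation $g=\frac{\bar a}{\max(\bar a,|f|)}f$, which is self-contained, yields $|h|=(|f|-\bar a)^{+}\leqslant\epsilon/a_{N,n}$ directly, and lets you take $c=1$.
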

\begin{proof} In the proof of Proposition \ref{OBS-1} we showed that (B1) implies $\text{(}\overline{\text{wS}}\text{)}$, which we may write in the following way
$$\textstyle
\forall\:N\;\exists\:M\;\forall\:m\;\exists\:n\;\forall\epsilon>0\;\exists\:\bar{a}\in \bar{A} \colon \frac{1}{a_{M,m}}\leqslant \bar{a}+\frac{\epsilon}{a_{N,n}}.
$$
To show $\text{(}\bar{\text{B1}}\text{)}$, let $N$ be given. We select $M$ as in $\text{(}\overline{\text{wS}}\text{)}$. For given $m$ we select $n$ as in $\text{(}\overline{\text{wS}}\text{)}$. Let $\epsilon>0$ be given.  We put $\frac{\epsilon}{4}$ into $\text{(}\overline{\text{wS}}\text{)}$ and select $\bar{a}$ as in $\text{(}\overline{\text{wS}}\text{)}$.  Set $B:=\{\,f\in AC(X)\:;\: |f|\leqslant4\bar{a}\,\}$. To show the inclusion in $\text{(}\bar{\text{B1}}\text{)}$ we take  $f\in B_{M,m}$, that is $a_{M,m}|f|\leqslant1$. Then $|f|\leqslant\frac{1}{a_{M,m}}\leqslant \bar{a}+\frac{\epsilon}{2a_{N,n}}\leqslant 2\max(\bar{a},\frac{\epsilon}{4a_{N,n}})=\max(2\bar{a},\frac{\epsilon}{2a_{N,n}})$. According to \cite[Lemma 3.5]{ABB2009} there exist $f_1$, $f_2\in C(X)$ with $f=f_1+f_2$ and $|f_1|\leqslant 2\cdot2\bar{a}$, $|f_2|\leqslant 2\cdot\frac{\epsilon}{2a_{N,n}}$. That is $f_1\in B$ and $f_2\in \epsilon B_{N,n}$,  thus $f\in B + \epsilon B_{N,n}$.
\end{proof}

In view of Theorem B, which provides a characterization of $\Projeins\mathcal{A}C=0$ via (Q) but no characterization of (ultra-)bornological spaces  $AC(X)$, it is a natural question if $\mathcal{A}$ satisfying (wQ) is sufficient for $AC(X)$ being (ultra-)bornological or barrelled. Since this cannot be achieved by the use of $\Projeins$-methods one could hope that the bornologicity criterion (which leaded to a non-homological proof for the implication \textquotedblleft{}(wQ) $\Rightarrow(AC)_0(X)$ barrelled\textquotedblleft{}) would yield an improvement of this type.
Unfortunately this is not the case: Theorem \ref{THM} cannot help us to find any sufficient condition for bornological $AC(X)$ spaces which is strictly weaker than (Q). In fact, if $AC(X)$ is bornological or barrelled, then condition (wQ) follows  by Theorem B. On the other hand, if we wanted to apply  Theorem \ref{THM} we would have to assume (B1) and  by Proposition \ref{OBS-1} the sequence $\mathcal{A}$ must satisfy (Q).

\subsection{The case of Fr\'{e}chet spaces}

We study the case that the spaces $AC(X)$ and $(AC)_0(X)$ are Fr\'{e}chet spaces. That is, we put $a_{N,n}=2^na_N$ for some increasing sequence $(a_N)_{N\in\mathbb{N}}$. Alternatively, we may simply define $AC(X)=\proj{N}Ca_N(X)$ and $(AC)_0(X)=\proj{N}C(a_N)_0(X)$.
\smallskip
\\Before we present results on the above spaces for a general locally compact and $\sigma$-compact space $X$ let us study the case $X=\mathbb{N}$. In this situation, the spaces under consideration turn out to be the well-known K\"othe echelon spaces $\lambda^{\infty}(A)$ and $\lambda^0(A)$ where the K\"othe matrix $A$ is given by $A=(a_N)_{N\in\mathbb{N}}$ (in the notation of \cite[Definition 1.2]{BMS1982a}).
\smallskip
\\The following observations are easy; they all refer to the case that the spaces $AC(X)$ and $(AC)_0(X)$ are Fr\'{e}chet spaces and that $X=\mathbb{N}$.
\begin{compactitem}
\item[a.] The system $\bar{A}$ introduced in the proof of Proposition \ref{OBS-1} is just the K\"othe set
$$
\bar{V}=\big\{\bar{a}\colon \mathbb{N}\rightarrow\;\:]0,\infty[\:;\:\forall\:N\colon\sup_{i\in\mathbb{N}}a_N(i)\bar{a}(i)<\infty\big\}
$$
of Bierstedt, Meise, Summers \cite[Definition 1.4]{BMS1982a}.\vspace{3pt}
\item[b.] Condition $(\overline{\text{wS}})$ of the proofs of Proposition \ref{OBS-1} and Corollary \ref{SCH-1} reduces to
$$
\textstyle\forall\:N\;\exists\:M\;\forall\:\epsilon>0\;\exists\:\bar{a}\in \bar{A}\: \colon \frac{1}{a_{M}}\leqslant \bar{a}+\frac{\epsilon}{a_{N}},
$$
which is equivalent to condition
$$
\text{(wS)}\;\;\;\;\;\begin{array}{c}\vspace{3pt}
\forall\:N\;\exists\:M\;\forall\:\epsilon>0\;\exists\:\bar{a}\in\bar{A}\;\:\forall\:i\in\mathbb{N}\colon\\
\textstyle \frac{1}{a_M(i)}\leqslant\frac{\epsilon}{a_N(i)} \: \text{ whenever } \: \bar{a}(i)<\frac{1}{a_M(i)}
\end{array}
$$
of Bierstedt, Meise, Summers \cite[Proposition 3.2]{BMS1982a}.\vspace{3pt}
\item[c.] The conditions (Q) and (\underline{Q}) both are equivalent to
$$
\textstyle\forall\:N\;\exists\:M\;\forall\:K,\,\epsilon>0\;\exists\: S>0\colon \frac{1}{a_M}\leqslant\frac{\epsilon}{a_N}+\frac{S}{a_K}.
$$
\end{compactitem}

They are also equivalent to the regularly decreasing condition of \cite{BMS1982}.

Let us now review some well-known results on the spaces $\lambda^{\infty}(A)$ and $\lambda^0(A)$, which should be compared with Propositions \ref{PROP-5} and \ref{PROP-6} below.

\begin{prop}\label{REM-5}\footnote{see Bierstedt, Meise, Summers \cite[Proposition on p.~48, Proposition 3.2, Corollary 3.5 and Example 3.11]{BMS1982a}, Vogt \cite[last Remark on page 167]{Vogt1987} and Meise, Vogt \cite[27.20]{MeiseVogtEnglisch}.} Let $A$ be a K\"othe matrix and denote by $\mathcal{A}L$ and $\mathcal{A}_0L$ the natural projective spectra corresponding to $\lambda^{\infty}(A)$ and $\lambda^{0}(A)$, respectively. 
\begin{compactitem}
\item[(a)] The following conditions are equivalent.
\begin{compactitem}\vspace{2pt}
\item[(i)] $\mathcal{A}L$ is reduced.\vspace{2pt}
\item[(ii)] $\lambda^{\infty}(A)$ is quasinormable.\vspace{2pt}
\item[(iii)] $A$ satisfies condition (wS).\vspace{2pt}
\item[(iv)] $A$ satisfies condition (Q).\vspace{2pt}
\item[(v)] $A$ satisfies condition (\underline{Q}).
\end{compactitem}\vspace{3pt}
\item[(b)] $\mathcal{A}_0L$ is always reduced. Moreover, the following conditions are equivalent.
\begin{compactitem}\vspace{2pt}
\item[(i)] $\lambda^{0}(A)$ is quasinormable.\vspace{2pt}
\item[(ii)] $A$ satisfies condition (wS).\vspace{2pt}
\item[(iii)] $A$ satisfies condition (Q).\vspace{2pt}
\item[(iv)] $A$ satisfies condition (\underline{Q}).
\end{compactitem}\vspace{3pt}
\item[(c)] There exists a K\"othe matrix $A$ which does not satisfy condition (wS), that is the space $\lambda^0(A)$ is reduced but not quasinormable.
\end{compactitem}
\end{prop}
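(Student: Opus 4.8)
The plan is to assemble the five (resp.\ four) equivalences from three sources: the reformulations of (B1), $(\overline{\mathrm{B1}})$, $(\overline{\mathrm{wS}})$, (Q) and (\underline{Q}) available in the Fr\'{e}chet--K\"{o}the situation (the discussion preceding Proposition~\ref{PROP-2}, together with the three observations a.--c.\ stated just before the present proposition), the results of Section~\ref{Criterion} applied to a nested spectrum of Banach spaces with inclusions as linking maps, and the classical characterisation of quasinormability of echelon spaces by condition (wS) due to Bierstedt, Meise and Summers. Throughout one uses that for the discrete space $X=\mathbb{N}$ the step spaces are the weighted sequence spaces $Ca_N(\mathbb{N})=\ell^{\infty}(a_N)$ and $C(a_N)_0(\mathbb{N})=c_0(a_N)$, that $C_c(\mathbb{N})$ is the space $\varphi$ of finitely supported sequences, and that $\mathcal{A}L$ and $\mathcal{A}_0L$ are exactly the nested Banach spectra belonging to these, so that the specialisations recorded before Proposition~\ref{PROP-2} and in observations a.--c.\ apply verbatim.

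For part~(a) I would first settle the purely weight-theoretic equivalences (iii)$\Leftrightarrow$(iv)$\Leftrightarrow$(v): by the observations above, (Q) and (\underline{Q}) both coincide with a single displayed inequality and $(\overline{\mathrm{wS}})$ reduces to (wS), while the equivalence of that inequality with (wS) is \cite[Proposition~3.2]{BMS1982a} (it also has a short direct proof --- split $\mathbb{N}$ according to whether $1/a_M(i)\le\epsilon/a_N(i)$, observe that on the complement (Q) forces the quotients $a_K/a_M$ to stay bounded, and patch the resulting constants into a single $\bar a\in\bar A$). The equivalence (ii)$\Leftrightarrow$(iii), i.e.\ quasinormability of $\lambda^{\infty}(A)$ is characterised by (wS), is \cite[Proposition~3.2 and Corollary~3.5]{BMS1982a} (cf.\ \cite[27.20]{MeiseVogtEnglisch}, \cite{Vogt1987}). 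Finally (i)$\Leftrightarrow$(iii): since $\mathcal{A}L$ is a spectrum of Banach spaces with inclusions as linking maps, Proposition~\ref{PROP-2} gives that reducedness of $\mathcal{A}L$ is equivalent to (B1); moreover the proof of Proposition~\ref{OBS-1} yields (B1)$\Rightarrow(\overline{\mathrm{wS}})$, the proof of Corollary~\ref{SCH-1} yields $(\overline{\mathrm{wS}})\Rightarrow(\overline{\mathrm{B1}})$, and $(\overline{\mathrm{B1}})\Rightarrow$(B1) is the last assertion of Proposition~\ref{PROP-2}. Since $(\overline{\mathrm{wS}})$ coincides with (wS) here, this closes the loop and all five conditions in~(a) are equivalent.

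For part~(b), that $\mathcal{A}_0L$ is always reduced is precisely the computation in the proof of Proposition~\ref{REM-4}: $\varphi$ is $\|\cdot\|_{N,n}$-dense in each $c_0(a_N)$, so $\mathcal{A}_0C$ trivially satisfies (B1), whence $\mathcal{A}_0L$ is strongly reduced by Proposition~\ref{REM-2}, in particular reduced. The equivalences (i)$\Leftrightarrow$(ii)$\Leftrightarrow$(iii)$\Leftrightarrow$(iv) are obtained exactly as in part~(a): (ii)$\Leftrightarrow$(iii)$\Leftrightarrow$(iv) from the observations above and \cite[Proposition~3.2]{BMS1982a}, and (i)$\Leftrightarrow$(ii) --- quasinormability of $\lambda^{0}(A)$ characterised by (wS) --- again by \cite[Proposition~3.2 and Corollary~3.5]{BMS1982a} (cf.\ \cite[27.20]{MeiseVogtEnglisch}). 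I would stress the contrast with part~(a): for $\lambda^{0}(A)$ both reducedness of the spectrum and condition (B1) hold automatically, so neither can be added to the list. For part~(c) it then suffices, by part~(b), to exhibit one K\"{o}the matrix $A$ violating (wS); such a matrix --- weights organised in blocks on which the relevant quotients oscillate, so that no single $\bar a\in\bar A$ can be interposed --- is constructed in \cite[Example~3.11]{BMS1982a}, and one may equally invoke the example \cite[Example~5.12]{DA} already used in the proof of Proposition~\ref{REM-4}, after verifying that it violates (wS) in the present Fr\'{e}chet situation.

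I do not expect any single implication to be genuinely hard; the main obstacle is the bookkeeping. One must keep careful track of the quantifier pattern distinguishing, for instance, (wS) --- a single $\bar a\in\bar A$ good for all $K$ at once --- from (Q) --- for each $K$ separately a constant $S$ --- and one must check that the arguments of Section~\ref{Criterion}, written for $\mathcal{A}C$ over a general locally compact $\sigma$-compact $X$ with honest inductive steps, really do specialise without loss to $X=\mathbb{N}$ and to the degenerate case $X_{N,n}=X_{N,n+1}$. The genuinely non-formal input --- the equivalence of (wS) with (Q) and the characterisation of quasinormability of $\lambda^{\infty}(A)$ and $\lambda^{0}(A)$ by (wS) --- is taken over verbatim from \cite{BMS1982a}.
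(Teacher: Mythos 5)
Your proposal is correct, but note that the paper offers no proof of this proposition at all: it is stated as a review of known results, and the footnote to Bierstedt--Meise--Summers, Vogt and Meise--Vogt is the entire justification. What you add is a genuine internal derivation of part of the statement: the cycle \textquotedblleft{}reduced $\Leftrightarrow$ (B1) $\Rightarrow$ $\text{(}\overline{\text{wS}}\text{)}=\text{(wS)}$ $\Rightarrow$ $\text{(}\bar{\text{B1}}\text{)}$ $\Rightarrow$ (B1)\textquotedblright{}, assembled from Propositions \ref{PROP-2} and \ref{OBS-1} and Corollary \ref{SCH-1}, is essentially Proposition \ref{PROP-5} specialised to $X=\mathbb{N}$ --- but you obtain it non-circularly, which matters because the paper's proof of \textquotedblleft{}(vii)$\Leftrightarrow$(v)\textquotedblright{} in Proposition \ref{PROP-5} itself cites the present proposition. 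Two details deserve emphasis. First, the step $\text{(B1)}\Rightarrow\text{(}\overline{\text{wS}}\text{)}$ extracted from the proof of Proposition \ref{OBS-1} uses that $1/a_{M',m}$ lies in the unit ball $B_{M',m}$; this holds for the O-growth steps $\ell^{\infty}(a_N)$ but fails for the o-growth steps $c_0(a_N)$, so it is essential that you confine this step to part (a) --- as you do, and as Proposition \ref{PROP-6}(iv) shows is unavoidable. Second, for part (c) the reference \cite[Example 5.12]{DA} concerns a double sequence failing (wQ) in the genuine PLB setting, so it is not directly a Fr\'echet counterexample to (wS); the safe citation is \cite[Example 3.11]{BMS1982a}, which is what the paper's footnote points to. The genuinely non-formal inputs --- quasinormability of $\lambda^{\infty}(A)$ and $\lambda^{0}(A)$ characterised by (wS), and $\text{(wS)}\Leftrightarrow\text{(Q)}$ --- are taken from the same sources the paper cites, so your argument and the paper's citation coincide where it counts.
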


As a consequence the implication \textquotedblleft{}(ii)$\Rightarrow$(i)\textquotedblright{} in Proposition \ref{PROP-2}  and the implication \textquotedblleft{}(B1)$\Rightarrow(\bar{\text{B1}})$\textquotedblright{} do not hold in general.
\smallskip
\\To conclude, we consider Fr\'{e}chet spaces $AC(X)$ and $(AC)_0(X)$ for an arbitrary locally compact and $\sigma$-compact topological space $X$.

\begin{prop}\label{PROP-5} In the Fr\'{e}chet case, the following conditions are equivalent.
\vspace{3pt}
\\\begin{tabular}{rlrl}
(i)   &$AC(X)$ is quasinormable.      & (v)   &$\mathcal{A}$ satisfies (Q).             \\
(ii)  &$\mathcal{A}C$ is reduced.            & (vi)  &$\mathcal{A}$ satisfies (\underline{Q}).    \\
(iii) &$\mathcal{A}C$ satisfies (B1).         & (vii) &$\mathcal{A}$ satisfies condition $\text{(}\overline{\text{wS}}\text{)}$. \\
(iv)  &$\mathcal{A}C$ satisfies $\text{(}\bar{\text{B1}}\text{)}$.&       & \\
\end{tabular}
\end{prop}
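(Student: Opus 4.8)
The plan is to read off almost all of the equivalences from results already in place, so that the only real work lies in the single implication \textquotedblleft{}(i)$\Rightarrow$(v)\textquotedblright{}. The first step is to collect the simplifications specific to the Fr\'echet case $a_{N,n}=2^na_N$: absorbing all powers of $2$ exactly as in the discussion preceding Proposition \ref{PROP-2}, condition (wQ) becomes trivially true (take $M:=N$), each of (B1), $(\bar{\text{B1}})$, (Q), (\underline{Q}) and $(\overline{\text{wS}})$ collapses to its \textquotedblleft{}one-sided\textquotedblright{} form, and in particular (Q) and (\underline{Q}) become one and the same condition, which already settles \textquotedblleft{}(v)$\Leftrightarrow$(vi)\textquotedblright{} (for \textquotedblleft{}(v)$\Rightarrow$(vi)\textquotedblright{} one may also quote Proposition \ref{REM-3}(b)).

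Since $AC(X)=\proj{N}Ca_N(X)$ is now a projective spectrum of Banach spaces with inclusions as linking maps, Proposition \ref{PROP-2} applies and gives \textquotedblleft{}(iv)$\Rightarrow$(ii)$\Leftrightarrow$(iii)\textquotedblright{}. The proof of Proposition \ref{OBS-1} shows that (B1) implies $(\overline{\text{wS}})$ and the proof of Corollary \ref{SCH-1} shows that $(\overline{\text{wS}})$ implies $(\bar{\text{B1}})$, i.e.\ \textquotedblleft{}(iii)$\Rightarrow$(vii)$\Rightarrow$(iv)\textquotedblright{}; hence (ii), (iii), (iv) and (vii) are mutually equivalent. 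Finally \textquotedblleft{}(iii)$\Leftrightarrow$(v)\textquotedblright{} is Proposition \ref{OBS-1}, whose hypothesis \textquotedblleft{}(wQ) and (B1)\textquotedblright{} is now just (B1); alternatively one can use Proposition \ref{REM-3}. At this point (ii)--(vii) are all equivalent, and it only remains to insert (i) into the chain.

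For \textquotedblleft{}(iv)$\Rightarrow$(i)\textquotedblright{} I would use the remark already made before Proposition \ref{PROP-2}: in the Fr\'echet case $(\bar{\text{B1}})$ reads $\forall\:N\;\exists\:M\;\forall\:\epsilon>0\;\exists\:B\subseteq AC(X)\text{ bounded}$ with $B_M\subseteq B+\epsilon B_N$, and writing $f\in B_M\cap AC(X)$ as $f=b+\epsilon g$ with $b\in B\subseteq AC(X)$ gives $g=\epsilon^{-1}(f-b)\in AC(X)$, so $B_M\cap AC(X)\subseteq B+\epsilon(B_N\cap AC(X))$, which is quasinormability of $AC(X)$. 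For the converse \textquotedblleft{}(i)$\Rightarrow$(v)\textquotedblright{} I would argue by localization. Fix $N$ and choose $M$ by quasinormability. Given $K$ and $\epsilon>0$, quasinormability yields an absolutely convex bounded set $B\subseteq AC(X)$ with $B_M\cap AC(X)\subseteq B+\epsilon(B_N\cap AC(X))$; since $B$ is bounded in the Banach space $Ca_K(X)$ there is $S>0$ with $B\subseteq SB_K$, so every $f\in AC(X)$ with $\|f\|_M\leqslant 1$ obeys the pointwise estimate $|f|\leqslant\frac{S}{a_K}+\frac{\epsilon}{a_N}$ on $X$. To let the weights themselves appear, fix $x_0\in X$ and, using continuity of $a_M$ and local compactness of $X$, pick $\varphi\in C_c(X)$ with $0\leqslant\varphi\leqslant 1$, $\varphi(x_0)=1$ and $\supp\varphi\subseteq\{x\in X\colon a_M(x)<2a_M(x_0)\}$. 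Then $\frac{1}{2a_M(x_0)}\varphi\in C_c(X)\subseteq AC(X)$ has $M$-norm at most $1$, so evaluating the estimate at $x_0$ gives $\frac{1}{2a_M(x_0)}\leqslant\frac{S}{a_K(x_0)}+\frac{\epsilon}{a_N(x_0)}$; since $x_0$ is arbitrary, $\frac{1}{a_M}\leqslant\max\big(\frac{4S}{a_K},\frac{4\epsilon}{a_N}\big)$ on $X$, which after rescaling $\epsilon$ is the Fr\'echet form of (Q).

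The only genuinely new step is this last implication, and the point that requires care is conceptual: quasinormability is a property of the locally convex space $AC(X)$ and therefore a priori only controls functions that lie in $AC(X)$, whereas (Q) is a pointwise inequality between the weights. Bridging the two is exactly what the compactly supported bump functions do; in addition one must watch the order of the quantifiers, since the bounded set $B$ --- and hence the constant $S$ --- is allowed to depend on both $K$ and $\epsilon$, which is precisely what (Q) permits. (Trying to route (i) through $(\overline{\text{wS}})$ would also force one to replace the weight associated with a bounded set by a continuous one; going straight to (Q) sidesteps this.) All the remaining links are bookkeeping with Propositions \ref{PROP-2}, \ref{REM-3}, \ref{OBS-1} and Corollary \ref{SCH-1}, together with the elementary Fr\'echet-case reductions of the conditions involved.
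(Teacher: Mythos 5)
Your argument is correct, and for the links among (ii)--(vii) it follows essentially the paper's own bookkeeping: Proposition \ref{PROP-2} for (iv)$\Rightarrow$(ii)$\Leftrightarrow$(iii), Corollary \ref{SCH-1} for (iii)$\Rightarrow$(iv) (whose proof indeed factors through $\text{(}\overline{\text{wS}}\text{)}$, which is how you legitimately thread (vii) into the chain), Proposition \ref{OBS-1} together with the automatic validity of (wQ) for (iii)$\Leftrightarrow$(v), and the Fr\'echet collapse of (Q) and (\underline{Q}) for (v)$\Leftrightarrow$(vi). The genuine divergence is in how (i) enters. The paper closes the circle by quoting (i)$\Leftrightarrow$(vii) from Bierstedt--Meise \cite[Proof of Proposition 5.8]{BM1986} and (vii)$\Leftrightarrow$(v) from Proposition \ref{REM-5}, so the implication \emph{out of} quasinormability is entirely outsourced to the literature. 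You instead prove (i)$\Rightarrow$(v) directly: quasinormability yields $B_M\cap AC(X)\subseteq SB_K+\epsilon B_N$ with the quantifier order that (Q) tolerates ($M$ depending only on $N$, while $S$ may depend on $K$ and $\epsilon$), and the compactly supported bump $\varphi/(2a_M(x_0))$ --- which lies in $C_c(X)\subseteq AC(X)$ and has $M$-norm at most $1$ because $a_M<2a_M(x_0)$ on its support --- converts that inclusion of sets of functions into the pointwise weight inequality $\frac{1}{a_M}\leqslant\max\big(\frac{4S}{a_K},\frac{4\epsilon}{a_N}\big)$, which after rescaling $\epsilon$ is (Q). This step is sound (local compactness plus Urysohn supplies $\varphi$), and it is exactly the right bridge between a property of the space $AC(X)$ and a pointwise condition on the weights. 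What your route buys is a self-contained proof that dispenses with the external references \cite{BM1986} and \cite{BMS1982a}; what it gives up is the direct characterization of (vii) against (i), since in your scheme (vii) only rides along via (iii)$\Rightarrow$(vii)$\Rightarrow$(iv).
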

\begin{proof} \textquotedblleft{}(iv)$\Rightarrow$(ii)\textquotedblright{} This is Proposition \ref{PROP-2}.
\smallskip
\\\textquotedblleft{}(ii)$\Rightarrow$(iii)\textquotedblright{} This is Proposition \ref{PROP-2}.
\smallskip
\\\textquotedblleft{}(iii)$\Rightarrow$(iv)\textquotedblright{} This is Corollary \ref{SCH-1}.
\smallskip
\\\textquotedblleft{}(iv)$\Rightarrow$(i)\textquotedblright{} As we noted before Proposition \ref{PROP-2}, for projective spectra of Banach spaces $\text{(}\bar{\text{B1}}\text{)}$ implies the definition of quasinormability.
\smallskip
\\\textquotedblleft{}(i)$\Leftrightarrow$(vii)\textquotedblright{} This follows from Bierstedt, Meise \cite[Proof of Proposition 5.8]{BM1986}.
\smallskip
\\\textquotedblleft{}(vii)$\Leftrightarrow$(v)\textquotedblright{} This is known; see Proposition \ref{REM-5}.
\smallskip
\\\textquotedblleft{}(v)$\Leftrightarrow$(vi)\textquotedblright{} As we noted before Proposition \ref{REM-5}, in the Fr\'{e}chet case (Q) and (\underline{Q}) coincide.
\smallskip
\\\textquotedblleft{}(v)$\Rightarrow$(iii)\textquotedblright{} This is Proposition \ref{REM-3}.(b).
\smallskip
\\\textquotedblleft{}(iii)$\Rightarrow$(v)\textquotedblright{} In the Fr\'{e}chet case condition (wQ) reduces to
$$
\textstyle\forall \:N\;\exists\:M\;\forall\:K\;\exists\:S>0\colon\frac{1}{a_{M}} \leqslant S\max\big(\frac{1}{a_{N}},\frac{1}{a_{K}}\big)
$$
and is always satisfied: Let $N$ be given. We choose $M:=N$. For given $K$ we put $S:=1$. Then the estimate $\frac{1}{a_N}\leqslant\max(\frac{1}{a_{N}},\frac{1}{a_K})$ is trivial. Hence, Proposition \ref{OBS-1} yields the desired implication.

\end{proof}

\begin{prop}\label{PROP-6} In the Fr\'{e}chet case, the following statements hold.
\begin{compactitem}
\item[(i)] $\mathcal{A}_0C$ is always reduced.\vspace{2pt}
\item[(ii)] (wQ) is always satisfied.\vspace{2pt}
\item[(iii)] For $\mathcal{A}_0C$, condition (B1) is always satisfied.\vspace{2pt}
\item[(iv)] $(AC)_0(X)$ fails to be quasinormable in general. Thus conditions (B1) and $\text{(}\bar{\text{B1}}\text{)}$ are not equivalent for $\mathcal{A}_0C$.
\end{compactitem}
\end{prop}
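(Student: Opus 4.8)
The plan is that items (i)--(iii) are quick consequences of material already in the paper, while (iv) carries the actual content and is obtained by transporting a known pathology from the theory of K\"othe echelon spaces. For (i) I would argue as in the proof of Proposition~\ref{REM-4}: a compactly supported continuous function $f$ has $a_N|f|$ vanishing off a compact set, so $C_c(X)\subseteq C(a_N)_0(X)$ for every $N$ and hence $C_c(X)\subseteq(AC)_0(X)$; and for any $f\in C(a_N)_0(X)$ the net $(S_\alpha f)_{\alpha\in A}$ used there lies in $C_c(X)$ and converges to $f$ in $\|\cdot\|_N$. Thus $(AC)_0(X)$ is $\|\cdot\|_N$-dense in $C(a_N)_0(X)$ for each $N$, i.e.~$\mathcal{A}_0C$ is reduced (take $M:=N$).

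For (ii) I would simply quote the reduction of (wQ) carried out in the proof of Proposition~\ref{PROP-5}: in the Fr\'echet case (wQ) becomes $\forall\,N\;\exists\,M\;\forall\,K\;\exists\,S>0\colon \tfrac{1}{a_M}\leqslant S\max(\tfrac1{a_N},\tfrac1{a_K})$, which holds trivially with $M:=N$ and $S:=1$. For (iii) I would note that the argument of Proposition~\ref{REM-4}, specialized to $M:=N$, already yields $B_N^{\circ}\subseteq\overline{B_N^{\circ}\cap(AC)_0(X)}^{\,C(a_N)_0(X)}\subseteq\Bigcap{k\in\mathbb{N}}\big(B_N^{\circ}\cap(AC)_0(X)+\tfrac1k B_N^{\circ}\big)$, the second inclusion being the standard fact that the norm closure of a set $A$ lies inside $A+\tfrac1k B$ for the unit ball $B$ and every $k$; and this is precisely (B1) in the form it takes for a projective spectrum of Banach spaces.

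The substance of the proposition is (iv). Here I would pass to $X=\mathbb{N}$, where, as recorded in the observations preceding Proposition~\ref{REM-5}, the space $(AC)_0(\mathbb{N})$ is the K\"othe echelon space $\lambda^{0}(A)$ with $A=(a_N)_{N\in\mathbb{N}}$. Proposition~\ref{REM-5}(c) provides a K\"othe matrix $A$ that does not satisfy (wS), so that the corresponding $\lambda^{0}(A)$ is reduced but not quasinormable. By part (iii) this Fr\'echet space nevertheless satisfies (B1); on the other hand, condition $\text{(}\bar{\text{B1}}\text{)}$ implies quasinormability of the Fr\'echet limit, as noted just before Proposition~\ref{PROP-2}. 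Hence $\text{(}\bar{\text{B1}}\text{)}$ must fail for $\mathcal{A}_0C$ in this example, so (B1) does not imply $\text{(}\bar{\text{B1}}\text{)}$; equivalently, since (B1) holds always by (iii), were the two conditions equivalent then $(AC)_0(X)$ would be quasinormable in every instance, contradicting the first half of (iv).

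I do not anticipate a real obstacle here. The one point deserving a moment's care is to confirm that the abstract Fr\'echet-space notions of Section~\ref{Criterion}, namely condition (B1), condition $\text{(}\bar{\text{B1}}\text{)}$ and quasinormability, when applied to the spectrum $\mathcal{A}_0C$ coincide with the corresponding intrinsic properties of the Fr\'echet space $(AC)_0(X)$ equipped with its canonical fundamental system of seminorms $(\|\cdot\|_N)_N$; this is immediate since in the Fr\'echet case the linking maps are the dense inclusions $C(a_{N+1})_0(X)\hookrightarrow C(a_N)_0(X)$ and the balls $B_N^{\circ}$ are precisely the corresponding seminorm balls. With that identification in place, (iv) follows by combining (iii) with Propositions~\ref{PROP-2} and~\ref{REM-5}.
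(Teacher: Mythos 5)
Your proposal is correct and follows essentially the same route as the paper: (i)--(iii) by reducedness of $\mathcal{A}_0C$ and the Fr\'echet-case reductions of (wQ) and (B1), and (iv) by combining Proposition \ref{REM-5}(c) with the fact that $\text{(}\bar{\text{B1}}\text{)}$ forces quasinormability. The only cosmetic difference is that you re-derive (i) and (iii) directly via the multiplier net of Proposition \ref{REM-4} and the inclusion of a norm closure into $\Bigcap{k}(A+\frac{1}{k}B)$, where the paper instead cites \cite[Section 2]{ABB2009} and invokes Proposition \ref{PROP-2}; both are sound.
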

\begin{proof}
(i) This follows from Agethen, Bierstedt, Bonet \cite[Section 2]{ABB2009}.
\smallskip
\\(ii) See the proof of \textquotedblleft{}(iii)$\Rightarrow$(v)\textquotedblleft{} in Proposition \ref{PROP-5}.
\smallskip
\\(iii) By Proposition \ref{PROP-2}, (B1) is equivalent to the reducedness of $(AC)_0(X)$. Hence the assertion follows from statement (i).
\smallskip
\\(iv) This follows from Proposition \ref{REM-5}.(c). Now, it is enough to recall that for projective spectra of Banach spaces $\text{(}\bar{\text{B1}}\text{)}$ implies the definition of quasinormability.
\end{proof}

\noindent
\textbf{Acknowledgement.} The authors are indebted to P.\ Doma\'nski who pointed out an error in the former statement of Proposition \ref{PROP-2}. The research of J.\ Bonet was partially supported by by MEC and FEDER Project MTM 2007-62643; GV Project Prometeo 2008/101 and the net MTM 2007-30904-E (Spain).

\setlength{\parskip}{0cm}

\small

\vspace{10pt}

\begin{minipage}{7.5cm}
J.\ Bonet

Instituto Universitario de Matem\'{a}tica Pura

y Aplicada IUMPA

Universidad Polit\'{e}cnica de Valencia

E-46071 Valencia

SPAIN

e-mail: jbonet@mat.upv.es
\end{minipage}
\hspace*{\fill}
\parbox{7.5cm}{S.-A.\ Wegner

FB C -- Mathematik

Bergische Universit\"at Wuppertal

Gau\ss{}str.~20

D-42119 Wuppertal

GERMANY

e-mail: wegner@uni-wuppertal.de

}

\end{document}